\renewcommand {\a}{ \alpha }
\newcommand{\m}{\mu}
\newcommand{\vare}{\varepsilon}
\newcommand{\g}{\gamma}
\newcommand{\G}{\Gamma}
\newcommand{\varf}{\varphi}
\renewcommand{\d}{\delta}
\newcommand{\D}{\Delta}
\newcommand{\s}{\sigma}
\newcommand{\Sg}{\Sigma}
\renewcommand{\l}{\lambda}
\newcommand{\Om}{\Omega}
\newcommand{\R}{ \mathbb R}
\newcommand{\N}{ \mathbb N}
\newcommand{\Z}{ \mathbb Z}
\newcommand{\CD}{\mathcal D}
\newcommand{\CE}{\mathcal E}
\newcommand{\CG}{\mathcal G}
\newcommand{\CH}{\mathcal H}
\newcommand{\CV}{\mathcal V}
\newcommand {\GS}{\mathfrak S}
\newcommand {\ba}{\mathbf a}
\newcommand {\bb}{\mathbf b}
\newcommand {\BB}{\mathbf B}
\newcommand {\BH}{\mathbf H}
\newcommand {\BI}{\mathbf I}
\newcommand {\BM}{\mathbf M}
\newcommand {\BN}{\mathbf N}
\newcommand {\BS}{\mathbf S}
\newcommand {\BT}{\mathbf T}
\newcommand{\wt}{\widetilde}
\newcommand{\wh}{\widehat}
\DeclareMathOperator{\dom}{Dom} 
\DeclareMathOperator{\re}{Re} \DeclareMathOperator{\dist}{dist}
\DeclareMathOperator{\res}{\restriction}
\DeclareMathOperator{\rank}{rank}
\newtheorem{thm}{Theorem}[section]
\newtheorem{prop}[thm]{Proposition}
\theoremstyle{definition}
\theoremstyle{remark}
\newtheorem{assump}[thm]{Assumption}
\numberwithin{equation}{section}
\newcommand{\thmref}[1]{Theorem~\ref{#1}}
\newcommand{\bsymb}{\boldsymbol}
\newcommand{\sh}{Schr\"odinger }
\newcommand{\vs}{\vskip0.2cm}
\newcommand{\pl}{\rm{pl}}
\newcommand{\0}{\bsymb{0}}
\newcommand{\1}{\bsymb{1}}
\begin{document}

\title[Sparse potentials on graphs]{Spectral estimates for the Schr\"odinger operators with sparse potentials on graphs}

\author[G. Rozenblum]{G. Rozenblum}
\address{Department of Mathematics \\
                        Chalmers University of Technology\\
                        and  The University of Gothenburg \\
                         S-412 96, Gothenburg, Sweden}
\email{grigori@chalmers.se}
\author[M. Solomyak]{M. Solomyak}
\address{Department of Mathematics
\\ Weizmann Institute\\ Rehovot\\ Israel}
\email{solom@wisdom.weizmann.ac.il}

\subjclass[2010] {47A75; 47B39, 34L15, 34L20}
\keywords{Eigenvalue  estimates, Schr\"odinger operator on graphs,
Dimension at infinity, Sparse potentials.}

\begin{abstract}
A construction of "sparse potentials", suggested in \cite{RS09} for the
lattice $\Z^d,\ d>2$, is extended to a wide class of  combinatorial and metric graphs  whose
global dimension is a number $D>2$. For the Schr\"odinger operator $-\D-\a
V$ on such graphs, with a sparse potential $V$, we study the behavior (as $\a\to\infty$) of the number
$N_-(-\D-\a V)$ of negative eigenvalues of $-\D-\a V$. We show that by means of sparse potentials
one can realize
any prescribed asymptotic behavior of $N_-(-\D-\a V)$ under
very mild regularity assumptions. A similar construction works also for
the lattice $\Z^2$, where $D=2$.
\end{abstract}
\maketitle

\section{Introduction}\label{intro}
There exists a far-reaching parallelism between the theory of the Schr\"odinger operator on $\R^d$
and its discrete analogue on $\Z^d$. Still, there are issues where this parallelism is violated. Certainly, the most known examples of such violation come from the fact that,
unlike the case of $\R^d$, the Laplacian on $\Z^d$ is a bounded operator. However, there are also examples of a different origin.
One of them concerns estimates on the number $N_-(-\D-\a V)$ of the negative eigenvalues for
the operator $-\D-\a V$, where $V\ge0$ is a potential and $\a>0$ is a large parameter ("the coupling constant"). Both for $\R^d$ and for $\Z^d$ the "Rozenblum -- Lieb -- Cwikel estimate" (RLC estimate)
is satisfied: namely, one has
\begin{equation}\label{RLC}
    N_-(-\D-\a V)\le C\a^{d/2}\|V\|_{d/2}^{d/2},\qquad C=C(d);\ d\ge3.
\end{equation}
Here $\|\cdot\|_q$ stands for the $L^q$-norm in the continuous case and for the $\ell^q$-norm in the discrete case. In both cases the inequality \eqref{RLC} is valid for any potential $V$ such that the
norm in the right-hand side is finite. The difference between these cases is this:
\begin{equation*}
  \mbox{ on }\R^d: \   N_-(-\D-\a V)=o(\a^{d/2}) \ \mbox{if and only if } V\equiv0,
\end{equation*}
while
\begin{equation}\label{oZ}
    \mbox { on }\Z^d: \   N_-(-\D-\a V)=o(\a^{d/2}) \ \mbox { for any } V\in{\ell^{d/2}.}
\end{equation}

This latter effect was, probably, observed for the first time in \cite{RS08}.
The same effect manifests itself, if instead of the \sh operator on
$\Z^d$ we consider the \sh operator on an arbitrary combinatorial or metric graph, whose "global
dimension" is greater than $2$; this was shown in \cite{RS10}. Here the global dimension is defined as the exponent $D$,  such that the heat kernel $P(t;x,y)$, corresponding to the Laplacian on the given graph, satisfies the estimate
\begin{equation}\label{D}
    \|P(t;.,.)\|_{L^\infty}=O(t^{-D/2}),\qquad t\to\infty.
\end{equation}
For the graph $\Z^d$ one has $D=d$.

For the discrete case, in view of \eqref{oZ}, even a single example of a potential such that
$N_-(-\D-\a V)\asymp \a^{d/2}$ had been unknown for some time. Only in \cite{RS09} the authors suggested a general scheme that allowed us (for $G=\Z^d$) to construct discrete potentials, such that the function $N_-(-\D-\a V)$ has any prescribed asymptotic behavior as $\a\to\infty$, under
very mild regularity conditions. This includes also the case $N_-(-\D-\a V)\sim C\a^{d/2}$ with $C>0$. This scheme
was based upon some asymptotic estimates for $N_-(-\D-\a V)$ whose nature differs drastically from the estimate \eqref{RLC} and from
the majority of other estimates described in \cite{RS08,RS10}: namely, these latter ones  are permutation-invariant with respect to
the potential $V$. In contrast, the new estimates and asymptotic formulas obtained in \cite{RS09} depend not only on the size of $V$ but also on the geometry of its support. More exactly, the potential $V$ is supposed to be supported on a sequence of very sparsely placed  points. The eigenvalue distribution   results obtained for such \emph{sparse potentials}  are quite flexible.  On the other hand, this  class of potentials is rather special.

In the present paper we show that the construction of sparse potentials \emph{with a prescribed eigenvalues behavior}  is not restricted  to $G=\Z^d, \ d>2$, but can be, with minor modifications, extended  to a wide class of combinatorial graphs with $D>2$. This we do in Section \ref{comb}.

The case $D=2$ is critical in this topic, the general scheme of the spectral analysis of the \sh operator breaks down at many  points (as it also happens to the classical \sh operator in $\R^2$). In Section \ref{Z2setting} we show, however, that the approach based upon sparse potentials works for the important case $G=\Z^2$ as well (although using some more specific analytical tools.)

Finally, in Section \ref{metric} we show that this approach can be successfully modified to the \sh operator on such metric graphs, for which the associated weighted combinatorial graph admits the  construction of sparse potentials.

Sparse potentials, in the spectral theory of the one-dimensional Schr\"o\-dinger  operator, were introduced by D.Pearson, \cite{Pearson}. Sparse potentials in higher dimensions were considered by S.Molchanov and B.Vainberg in \cite{MM1}, \cite{MM2}. Compared with our papers, these authors  analyzed problems of a different type, and they used another definition of sparseness. They did not discuss sparse potentials on graphs different from $\Z^d$.

\section{Operators on combinatorial graphs}\label{comb}
\subsection{Operator $\BB_{V,G}$.}\label{bbvg}
Let $G$ be a combinatorial graph, with the set of vertices $\CV=\CV(G)$ and the set of edges
$\CE=\CE(G)$. We write $e=(v,v')$ for the edge whose endpoints are the vertices $v,v'$.
We then also say that $v,v'$ are neighboring vertices, and write $v\sim v'$.
For simplicity, we assume that the graph is connected and has no loops, multiple edges, or vertices of degree one.  We suppose that $\#\CV=\infty$, and that degrees of all vertices
are finite. On $\CV$ we consider the standard counting measure $\s$. So, $\s(v)=1$ for each vertex $v$. Our basic Hilbert space is $\ell^2=\ell^2(G)=L^2(\CV;\s)$. We also denote $\ell^q=\ell^q(G)=L^q(\CV;\s),\ 0<q\le\infty$.

With each edge $e\in\CE$ we associate a weight $g_e>0$.
On $\ell^2(G)$ we consider the quadratic form
\begin{equation}\label{form}
    \ba_G[f]=\sum_{e\in\CE; e=(v,v')}g_e|f(v)-f(v')|^2,
\end{equation}
with the natural domain
\begin{equation*}
    \dom\ba_G=\{f\in\ell^2(G):\ba_G[f]<\infty\}.
\end{equation*}
This quadratic form is non-negative and closed; recall that by  definition the latter means that $\dom\ba_G$ is complete
with respect to the norm $(\ba_G[f]+\|f\|^2)^{1/2}$.
Denote by $-\D_G$ the  self-adjoint operator on $\ell^2$, associated with the quadratic form $\ba_G$. It is easy to see  that on its domain the operator $\D_G$ acts according to the formula
\begin{equation}\label{oper}
    (\D_G f)(v)=\sum_{v'\sim v}f(v') g_{(v,v')}-f(v)\sum_{v'\sim v} g_{(v,v')}.
\end{equation}
If the weights $g_e$ and the degrees $\deg v$ of all vertices are uniformly bounded, then the quadratic form \eqref{form} and, hence, the operator $\D_G$ are bounded.

Due to the embeddings $\ell^1\subset\ell^2\subset\ell^\infty$, any operator, bounded in $\ell^2$, is bounded also as acting from $\ell^1$ to $\ell^\infty$, with the corresponding
estimate for the norms. This applies, in particular, to the operators $\exp(\D_G t)$ and,
therefore, the heat kernel satisfies the estimate $\|P(t;.,.)\|_{L^\infty}\le C<\infty,\ \forall t\in\R$. Our main assumption here is that \eqref{D} is satisfied with some $D>2$. Then by the Varopoulos theory, see \cite{Varop}, we have
\begin{equation}\label{varop}
    \|f\|^2_{\ell^p}\le C\ba_G[f],\qquad p=p(D)=D(D-2)^{-1},\ \forall f\in\dom\ba_G.
\end{equation}
In particular, \eqref{varop} is valid for any function with finite support.
Denote by $\CH(G)$ the completion of the set of all such functions in the metric generated
by the quadratic form $\ba_G$. This is a Hilbert space; the estimate \eqref{varop}
implies that $\CH(G)$ can be realized as a space of functions, embedded into $\ell^p$. It follows from here that
any function $f\in\CH(G)$ tends to zero "at infinity" (notation $f\to0$). More exactly,
$f\to0$ means that for any $\vare>0$ the set $\{v\in\CV:|f(v)|>\vare\}$ is finite.

\vs

Below we will systematically use the Birman -- Schwinger operator $\BB_{V,G}$ that corresponds to the
\sh operator  $-\D-\a V$,  see, e.g., \cite{RS08} and the references therein.
Recall that in our case $\BB_{V,G}$
 is the operator in the space $\CH(G)$, generated by the quadratic form
 \begin{equation}\label{bbV}
    \bb_{V,G}[f]=\sum_{v\in\CV}V(v)|f(v)|^2.
 \end{equation}
 The eigenvalue distribution characteristics for the operators $-\D_G-\a V$ in $\ell^2(G)$ and $\BB_{V,G}$ in $\CH(G)$ are connected by the famous Birman -- Schwinger principle: for any $\a>0$ one has
 \begin{equation}\label{bsch}
    N_-(-\D_G-\a V)=n(\a^{-1},\BB_{V,G}).
 \end{equation}
 Here $n(s,\BT)=\#\{n\in\N:\l_n(\BT)>s\},\ \ s>0,$ stands for the eigenvalue distribution function of a compact non-negative operator $\BT$.
 Moreover, the next two properties are equivalent:

 (A) The operator $\BB_{V,G}$ is compact;

 (B) $N_-(-\D_G-\a V)<\infty$ for all $\a>0$.
\vs
 The equality \eqref{bsch} allows one to formulate spectral estimates and asymptotic formulas for
 $-\D_G-\a V$ in the terms of the operator $\BB_{V,G}$. For instance, the estimate \eqref{RLC} means that for $V\in\ell^{d/2}(\Z^d)$ the operator
 $\BB_{V,G}$ belongs to the "weak Neumann -- Schatten ideal" $\Sg_{d/2}$, and that its norm in this ideal satisfies the estimate
 \begin{equation*}%\label{weak}
    \|\BB_{V,G}\|_{\Sg_{d/2}}\le C\|V\|_{d/2};
 \end{equation*}
 see \cite{BSbook}, \S 11.6, for the definition and basic properties of the classes $\Sg_q$.
\vs
 Below we describe the estimates of a completely different nature. They are valid for a rather wide class of graphs and for a special, but rather restricted class of potentials. To describe these estimates, we need some properties of the Green function for the Laplacian $\D_G$. Recall that we always assume $D>2$.

 We denote by $(.,.)_{\CH(G)}$ the scalar product in $\CH(G)$, so that
\[(f,g)_{\CH(G)}=\ba_G[f,g].\]
Up to the end of this section, we drop the subindex $\CH(G)$ in this notation and in the notation of the corresponding norm.

\subsection{The Green function}\label{green}
For a fixed $v\in \CV$ consider the linear functional on $\CH(G)$:
$$\varf_v(f)=f(v).$$
In view of \eqref{varop} this functional is continuous on  $\CH(G)$, and therefore  there exists a unique function $h_v\in \CH(G)$ such that
\begin{equation}\label{1}
    (f,h_v)=f(v), \qquad \forall f\in \CH(G).
\end{equation}
Thus the function $h_v$ satisfies the equation
\begin{equation}\label{2}
    -\D h_v(w)=\d^v_w
\end{equation}
and is, obviously, real-valued. It is natural to call it {\it the Green function} for the operator $\D_G$.
Taking  $f=h_w$ in \eqref{1}, we obtain
\begin{equation}\label{3m}
   (h_w,h_v)=h_w(v)=h_v(w),\qquad\forall v,w\in \CV.
\end{equation}
By \eqref{varop}, this implies that
\begin{equation}\label{3n}
    h_w(v)\to0\qquad{\rm{as} }\ w\ \rm{is\ fixed.}
\end{equation}
%$h_w(v)\to0$ as the point $w$ is fixed and

 For $w=v$  the equality \eqref{3m} gives
    \begin{equation*}%\label{4}
    (h_v,h_v)=h_v(v),
    \end{equation*}
whence all the numbers $\mu_v^2=h_v(v)$ are positive.
We normalize (in $\CH(G)$) the functions $h_v$, i.e., we set $\wt{h}_v=\mu_v^{-1}h_v$.

The function $\mu(v)^{-1}\wt{h}_v$ can be also defined as the {\it unique} function $f\in\CH(G)$ that minimizes the quadratic functional $\|f\|^2$ under the condition $f(v)=1$. Note also that the number $\mu_v$ is nothing but the capacity of the one-point set $\{v\}$ with respect to the quadratic form $\ba_G$.

\subsection{Sparse potentials.}\label{sparse potentials}
\vs
For a graph $G$,
we introduce the notion of  a {\it sparse subset} $Y\subset\CV$ in the same way as this was done for $G=\Z^d$
in \cite{RS09}, Section 6.2. Namely, let $Y\subset G$ be an infinite set, and let $\CH_Y$ stand for the subspace in $\CH(G)$ , spanned
by the functions $\{h_y:y\in Y\}$. We say that $Y$  is sparse if in $\CH(G)$ there exists a compact linear operator
$\BT$, such that the operator $ \BI-\BT$ has bounded inverse and the functions $e_v=(\BI-\BT)^{-1}\wt{h}_v$ form an orthonormal (not necessarily complete) system in $\CH(G)$. We do not discuss here the weakly sparse subsets, also introduced in \cite{RS09}.

We cannot claim that every graph $G$ with $D>2$ contains a sparse subset. In Subsection \ref{sps} we shall give a simple condition of a geometric nature, that guarantees existence of such subsets.

We say that a function $V$ on $G$  (the discrete potential) is sparse, if its support $Y_V=\{v\in\CV:V(v)\neq0\}$ is a sparse subset. Due to this definition, the quadratic form \eqref{bbV} can be re-written as
\begin{equation*}%\label{bbV2}
    \bb_{V,G}[f]=\sum_{v\in Y_V}V(v)|(f,\wt{h}_v)|^2=\sum_{v\in Y_V}V(v)|(f,(\BI-\BT)e_v)|^2.
\end{equation*}
If, in addition, $V\to0$, we always enumerate the points $v\in Y_V,\ v=v_n,$ in such a way that the sequence $V(v_n)$ is monotone.

Due to the sparseness of $Y_V$, the system of elements $\wt{h}_v,\ v\in Y_V$, is close to an orthonormal system in $\CH(G)$. Hence, the properties of the operator $\BB_{V,G}$ should be close to those of the diagonal operator  with the entries $V(v)$, $v\in Y_V$. More precisely,  consider the operator
\begin{equation*}%\label{NV}
    \BN_V=\sum_{v\in Y_V}\sqrt{V(v)}(\cdot,e_v)e_v.
\end{equation*}
We have
\[\BN_V(\BI-\BT^*)=\sum_{v\in Y_V} \sqrt{V(v)}(\cdot,\wt h_v)e_v,\]
whence
\[ \|\BN_V(\BI-\BT^*)f\|^2_{\CH(G)}=\sum_{v\in Y_V}V(v)
|(f,\wt h_v)|^2_{\CH(G)}=\bb_{V,G}[f].\]
 This means that
\begin{equation}\label{bv}
    \BB_{V,G}=(\BI-\BT)\BN_V^2(\BI-\BT^*).
\end{equation}

The first property of $\BB_{V,G}$ follows from \eqref{bv} immediately, cf. Theorems 6.3 and 6.4 in \cite{RS09}.

\begin{thm}\label{bound-sparse}
Let $V\ge 0$ be a sparse potential on a weighted graph $G$ with global dimension $D>2$. Then
the corresponding Birman-Schwinger operator $\BB_{V,G}$ is bounded if and only if the function $V$ is bounded, and is compact if and only if $V\to0$.

Moreover, \[ C\|V\|_{\infty}\le\|\BB_{V,G}\|\le C'\|V\|_{\infty},\]
where $C=\|(\BI-\BT)^{-1}\|^{-2}$ and $C'=\|\BI-\BT\|^2$. In the case of compactness, the following two-sided estimate, with the same constants $C,C'$, is valid for the eigenvalues $\l_n(\BB_{V,G})$:
\[ CV_n\le \l_n(\BB_{V,G})\le C'V_n.\]
\end{thm}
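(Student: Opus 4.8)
The plan is to derive everything from the key identity \eqref{bv}, namely $\BB_{V,G}=(\BI-\BT)\BN_V^2(\BI-\BT^*)$, together with the fact that $\BI-\BT$ has bounded inverse. The operator $\BN_V^2=\sum_{v\in Y_V}V(v)(\cdot,e_v)e_v$ is diagonal with respect to the orthonormal system $\{e_v\}$, so its spectral theory is transparent: it is bounded iff $\sup_v V(v)<\infty$, with $\|\BN_V^2\|=\|V\|_\infty$; it is compact iff $V(v)\to0$ along $Y_V$, which (given the enumeration making $V(v_n)$ monotone) is exactly the condition $V\to0$; and in the compact case its nonzero eigenvalues, listed in decreasing order, are precisely the numbers $V_n$. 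Here $V_n$ denotes the nonincreasing rearrangement of $\{V(v):v\in Y_V\}$, which coincides with $V(v_n)$ under our enumeration.

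First I would record the two-sided operator-norm comparison. Writing $\BS=\BI-\BT$, a bounded operator with bounded inverse, we have $\BB_{V,G}=\BS\BN_V^2\BS^*$, and for any $f$,
\[ \|\BN_V \BS^* f\|^2 = (\BB_{V,G}f,f) \le \|\BB_{V,G}\|\,\|f\|^2. \]
Conversely $\|\BN_V f\|^2 = \|\BN_V \BS^*(\BS^*)^{-1}f\|^2 \le \|\BB_{V,G}\|\,\|(\BS^*)^{-1}\|^2\|f\|^2$, giving $\|\BN_V\|^2\le\|(\BS)^{-1}\|^2\|\BB_{V,G}\|$, i.e. $\|V\|_\infty=\|\BN_V^2\|\le\|(\BI-\BT)^{-1}\|^2\|\BB_{V,G}\|$, which rearranges to the lower bound $C\|V\|_\infty\le\|\BB_{V,G}\|$ with $C=\|(\BI-\BT)^{-1}\|^{-2}$. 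The upper bound $\|\BB_{V,G}\|\le\|\BS\|^2\|\BN_V^2\|=\|\BI-\BT\|^2\|V\|_\infty$ is immediate from submultiplicativity. In particular $\BB_{V,G}$ is bounded iff $\|V\|_\infty<\infty$.

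Next, the eigenvalue estimates. Since $\BB_{V,G}=\BS\BN_V^2\BS^*$ is a one-sided multiplicative perturbation of $\BN_V^2$ by the invertible $\BS$, the Ky Fan / multiplicativity inequalities for singular values (eigenvalues, here, as all operators involved are nonnegative) give
\[ \l_n(\BB_{V,G}) = s_n(\BS\BN_V\cdot\BN_V\BS^*) \le \|\BS\|^2\, \l_n(\BN_V^2) = \|\BI-\BT\|^2\, V_n, \]
using $s_n(\BA\BC)\le\|\BA\|s_n(\BC)$ and $s_n(\BN_V\BS^*)=s_n(\BS\BN_V)$. For the lower bound write $\BN_V^2=\BS^{-1}\BB_{V,G}(\BS^*)^{-1}$ and apply the same inequality: $V_n=\l_n(\BN_V^2)\le\|\BS^{-1}\|^2\l_n(\BB_{V,G})$, which is $CV_n\le\l_n(\BB_{V,G})$. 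The compactness equivalence follows similarly: $\BB_{V,G}$ is compact iff $\BN_V^2$ is (a bounded-invertible factor on either side preserves the compact ideal), and $\BN_V^2$, being diagonal in an orthonormal system, is compact iff its diagonal entries $V(v)$ tend to zero, i.e. iff $V\to0$ in the sense defined above.

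I do not expect a genuine obstacle here; the content of the theorem is entirely contained in \eqref{bv} and the elementary fact that conjugation by a bounded-invertible operator distorts norms and singular values by at most bounded factors. The only point requiring a modicum of care is the bookkeeping translating "$\BN_V^2$ diagonal with entries $V(v)$" into the statements about $\|V\|_\infty$, compactness, and the rearranged eigenvalues $V_n$ — in particular verifying that the chosen monotone enumeration of $Y_V$ makes $\l_n(\BN_V^2)=V_n$ exactly, and that $e_v$ being merely orthonormal (not complete) in $\CH(G)$ causes no trouble, since the orthogonal complement of $\overline{\mathrm{span}}\{e_v\}$ is simply the kernel of $\BN_V$ and contributes nothing to the nonzero spectrum.
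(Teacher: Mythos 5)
Your proposal is correct and follows exactly the route the paper intends: the paper offers no written proof beyond the remark that the theorem ``follows from \eqref{bv} immediately'' (citing Theorems 6.3 and 6.4 of \cite{RS09}), and your argument is precisely the standard fleshing-out of that factorization --- two-sided norm bounds from submultiplicativity and the bounded invertibility of $\BI-\BT$, eigenvalue bounds from the singular-value inequality $s_n(\BA\BC)\le\|\BA\|\,s_n(\BC)$ applied in both directions, and compactness from the two-sided-ideal property. Your closing remarks on the incompleteness of $\{e_v\}$ and on the monotone enumeration giving $\l_n(\BN_V^2)=V_n$ are exactly the right bookkeeping points, and nothing is missing.
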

Recall that in the case of compactness we have, just due to the way of enumeration, $V_n\searrow0$.
\vs

The next result is more advanced. It extends Theorem 6.6 in \cite{RS09} (where it was $G=\Z^d$) to the case of general graphs. The proof survives. Still, we reproduce it here,
to make it possible to read the present paper independently.

We derive this result under an additional condition on the potential $V$: we assume that
the sequence $\{V_n\}$ is {\it moderately varying}. This means that $V_n\searrow 0$ and
$V_{n+1}/V_n\to1$. We shall use a result of M.G. Krein, see
Theorem 5.11.3 in \cite{GOKR}. Below we present its formulation,
restricting ourselves to the situation we need.
\begin{prop}\label{krein} Let $\BH\ge0$ and
$\BS$ be self-adjoint and compact operators. Suppose
$\rank\BH=\infty$ and the sequence of non-zero eigenvalues
$\l_n(\BH)$ is moderately varying. Then for the operator
$\BM=\BH(\BI+\BS)\BH$ one has
\begin{equation*}%\label{lim1}
    \l_n(\BM)\sim\l_n^2(\BH).
\end{equation*}
\end{prop}

Now we are in a position to prove the following result.
\begin{thm}\label{sppot}
Let $V\ge0$ be a sparse potential, such that the numbers $V_n$
form a moderately varying sequence. Then
\begin{equation*}%\label{sp1}
    \l_n(\BB_{V,G})\sim V_n.
\end{equation*}
\end{thm}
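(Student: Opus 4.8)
The plan is to combine the factorization \eqref{bv} of $\BB_{V,G}$ with Krein's theorem (Proposition~\ref{krein}). First I would set $\BH=\BN_V$, the self-adjoint non-negative operator $\BN_V=\sum_{v\in Y_V}\sqrt{V(v)}(\cdot,e_v)e_v$, whose non-zero eigenvalues are exactly the numbers $\sqrt{V_n}$ (with the agreed enumeration making $V_n\searrow0$); here we use that $\{e_v\}$ is orthonormal, which is precisely the sparseness hypothesis. Since $V\to0$ on a sparse support, $Y_V$ is infinite and $\rank\BN_V=\infty$, and by assumption $\l_n(\BN_V)=\sqrt{V_n}$ with $V_{n+1}/V_n\to1$; hence $\l_n(\BN_V)=\sqrt{V_n}$ is also moderately varying (the square root of a moderately varying sequence is moderately varying, since $\sqrt{V_{n+1}/V_n}\to1$). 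So the hypotheses of Proposition~\ref{krein} on $\BH$ are met.

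Next I would rewrite \eqref{bv} in the form required by Krein's theorem. We have $\BB_{V,G}=(\BI-\BT)\BN_V^2(\BI-\BT^*)$. This is not yet literally of the shape $\BH(\BI+\BS)\BH$, but it is unitarily equivalent (in the sense of having the same non-zero spectrum) to $\BN_V(\BI-\BT^*)(\BI-\BT)\BN_V$, by the standard fact that $\BA\BC$ and $\BC\BA$ have the same non-zero eigenvalues with multiplicities; taking $\BA=(\BI-\BT)\BN_V$ and $\BC=\BN_V(\BI-\BT^*)$ we get that $\BB_{V,G}=\BA\BC$ has the same non-zero eigenvalues as $\BC\BA=\BN_V(\BI-\BT^*)(\BI-\BT)\BN_V$. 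Now write $(\BI-\BT^*)(\BI-\BT)=\BI+\BS$ with $\BS=-\BT-\BT^*+\BT^*\BT$; since $\BT$ is compact, $\BS$ is a self-adjoint compact operator. Moreover $\BI+\BS=(\BI-\BT^*)(\BI-\BT)\ge0$. Therefore $\BM:=\BN_V(\BI+\BS)\BN_V$ satisfies the hypotheses of Proposition~\ref{krein} with $\BH=\BN_V$, and we conclude $\l_n(\BM)\sim\l_n^2(\BN_V)=V_n$. Since $\l_n(\BB_{V,G})=\l_n(\BM)$ for all $n$ (same non-zero eigenvalues), this gives $\l_n(\BB_{V,G})\sim V_n$, as claimed.

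The only genuinely delicate point is the passage from the two-sided operator product $(\BI-\BT)\BN_V^2(\BI-\BT^*)$ to the symmetric sandwich form $\BN_V(\BI+\BS)\BN_V$ needed to invoke Krein's theorem; this rests on the elementary but essential observation that $\BA\BC$ and $\BC\BA$ share their non-zero spectra, and on checking that $(\BI-\BT^*)(\BI-\BT)=\BI+\BS$ with $\BS$ compact and self-adjoint. Everything else---that $\BN_V$ has eigenvalues $\sqrt{V_n}$, that these form a moderately varying sequence, that $\BB_{V,G}$ is compact (guaranteed by Theorem~\ref{bound-sparse} since $V\to0$)---is immediate from the definitions and the material already established. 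I would also remark that this proof is verbatim the one for $G=\Z^d$ from \cite{RS09}, since nothing in it uses the specific structure of the graph beyond the existence of the sparse system $\{e_v\}$ and the factorization \eqref{bv}.

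\begin{proof}
By the definition of a sparse potential, $\{e_v:v\in Y_V\}$ is an orthonormal system in $\CH(G)$, so $\BN_V=\sum_{v\in Y_V}\sqrt{V(v)}(\cdot,e_v)e_v$ is a self-adjoint, non-negative operator whose non-zero eigenvalues are the numbers $\sqrt{V(v)}$, $v\in Y_V$. Since $V\to0$, the set $Y_V$ is infinite, $\rank\BN_V=\infty$, and with the agreed enumeration $\l_n(\BN_V)=\sqrt{V_n}$ with $V_n\searrow0$. As $V_{n+1}/V_n\to1$, also $\sqrt{V_{n+1}/V_n}\to1$, so the sequence $\{\sqrt{V_n}\}$ is moderately varying.

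Set $\BS=-\BT-\BT^*+\BT^*\BT$, a self-adjoint compact operator, so that $(\BI-\BT^*)(\BI-\BT)=\BI+\BS\ge0$. From \eqref{bv} we have $\BB_{V,G}=(\BI-\BT)\BN_V\cdot\BN_V(\BI-\BT^*)$. Since for any bounded operators $\BA,\BC$ the products $\BA\BC$ and $\BC\BA$ have the same non-zero eigenvalues together with their multiplicities, $\BB_{V,G}$ has the same non-zero eigenvalues as
\[
\BM:=\BN_V(\BI-\BT^*)(\BI-\BT)\BN_V=\BN_V(\BI+\BS)\BN_V.
\]
Applying Proposition~\ref{krein} with $\BH=\BN_V$ (which has infinite rank and a moderately varying eigenvalue sequence) yields $\l_n(\BM)\sim\l_n^2(\BN_V)=V_n$. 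Hence $\l_n(\BB_{V,G})=\l_n(\BM)\sim V_n$.
\end{proof}
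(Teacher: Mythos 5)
Your proof is correct and follows essentially the same route as the paper: factor $\BB_{V,G}$ via \eqref{bv}, pass to $\BM=\BN_V(\BI+\BS)\BN_V$ using the coincidence of non-zero spectra of $\BA\BC$ and $\BC\BA$, and apply Proposition~\ref{krein}. The only difference is that you spell out the details (that $\l_n(\BN_V)=\sqrt{V_n}$ is moderately varying, that $\BS$ is compact self-adjoint) which the paper leaves as ``evidently satisfied.''
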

\begin{proof}
  In view of \eqref{bv} the non-zero spectrum of the operator $\BB_{V,G}$  coincides
with that of the operator
\begin{equation}\label{kr}
    \BM_V:=\BN_V(\BI-\BT^*)(\BI-\BT)\BN_V=\BN_V(\BI+\BS)\BN_V,
\end{equation}
where
\begin{equation*}
    \BS=-\BT-\BT^*+\BT^*\BT.
\end{equation*}

Now we apply Proposition \ref{krein} to the operators $\BH=\BN_V$
and $\BM_V$, given by \eqref{oper} and \eqref{kr} respectively.
All the assumptions of Proposition are evidently satisfied, and we
get the desired result.
\end{proof}

In particular, taking $V(v_n)=n^{-2/D}$, we obtain a potential $V$ such
that the eigenvalues $\l_n(\BB_{V,G})$ asymptotically behave as $n^{-2/D}$.
This solves a problem discussed in Section \ref{intro}.

\subsection{On the existence of sparse subsets.}\label{sps}
Given a vertex $v\in\CV$, let us consider the function $f_v$ that vanishes at all vertices $w\neq v$ and $f_v(v)=1$. By the extremal property of the Green function $h_v$, we have
\[ \mu_v^{-2}=\mu_v^{-2}\|\wt{h}_v\|^2\le\|f_v\|^2=\ba_G[f_v]=\sum_{w\sim v}g_{(v,w)},\]
whence
\[\m_v^2=h_v(v)\ge \left(\sum_{w\sim v} g_{(v,w)}\right)^{-1}.\]
Given a number $R>0$, we say that a vertex $v$ {\it $R$-mild} if
\begin{equation*}%\label{mod}
    \sum_{w\sim v} g_{(v,w)}\le R.
\end{equation*}

\begin{prop}\label{exist}
Suppose that a graph $G$ is such that $D>2$ and that, for some $R>0$, $G$ contains infinitely many $R$-mild
vertices. Then $G$ contains a sparse subset.
\end{prop}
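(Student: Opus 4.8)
The plan is to locate the sparse subset $Y$ inside the set of $R$-mild vertices, choosing its points one by one so far apart that the normalized Green functions $\{\wt h_y:y\in Y\}$ form a system whose Gram matrix differs from the identity by an operator of arbitrarily small Hilbert--Schmidt norm. Once that is arranged, the compact operator $\BT$ demanded in the definition of a sparse subset can be read off from the Gram matrix exactly as in \cite{RS09}.

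First I would record two bounds on the normalizing constants $\mu_v$. Applying \eqref{varop} to $f=h_v$ and using the trivial inequality $\|h_v\|_{\ell^\infty}\le\|h_v\|_{\ell^p}$, one gets $\mu_v^2=h_v(v)\le\|h_v\|_{\ell^p}\le C^{1/2}\mu_v$, so $\mu_v\le C^{1/2}$ for every $v\in\CV$; on the other hand, if $v$ is $R$-mild then, by the inequality already established in the text, $\mu_v^2\ge(\sum_{w\sim v}g_{(v,w)})^{-1}\ge R^{-1}$. Hence $\mu_v^{-1}\mu_w^{-1}\le R$ for $R$-mild $v,w$, and the Gram entries obey $|(\wt h_v,\wt h_w)_{\CH(G)}|=\mu_v^{-1}\mu_w^{-1}|h_v(w)|\le R\,|h_v(w)|$ for $v\ne w$. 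Next I would fix numbers $\vare_n>0$ with $\sum_{n\ge1}n\vare_n^2<(2R^2)^{-1}$ and build $Y=\{v_1,v_2,\dots\}$ inductively: take $v_1$ to be any $R$-mild vertex, and, having chosen $R$-mild $v_1,\dots,v_n$, choose $v_{n+1}$ to be an $R$-mild vertex distinct from them with $|h_{v_i}(v_{n+1})|<\vare_n$ for $i=1,\dots,n$. This choice is possible because, by \eqref{3n} and \eqref{3m}, each set $\{w:|h_{v_i}(w)|\ge\vare_n\}$ is finite, so their union over $i\le n$ is finite, while by hypothesis infinitely many $R$-mild vertices lie outside it. Estimating the pair $(v_i,v_j)$ with $i<j$ by $\vare_{j-1}$ then gives $\sum_{v\ne w\in Y}|h_v(w)|^2\le 2\sum_{n\ge1}n\vare_n^2<R^{-2}$, whence the Gram operator $\BG$ on $\ell^2(Y)$ with matrix $\BG_{vw}=(\wt h_v,\wt h_w)_{\CH(G)}$ satisfies $\BG_{vv}=1$ and $\sum_{v\ne w}|\BG_{vw}|^2<1$. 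Thus $\BG-\BI$ is Hilbert--Schmidt, with Hilbert--Schmidt norm (and hence operator norm) less than $1$; in particular $\BG$ is bounded, positive and boundedly invertible, and $\{\wt h_y:y\in Y\}$ is a Riesz basis of $\CH_Y$.

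It then remains to extract $\BT$. Let $\BU:\ell^2(Y)\to\CH(G)$ be the bounded operator with $\BU\delta_v=\wt h_v$, so that $\BU^*\BU=\BG$; then $\BW:=\BU\BG^{-1/2}$ is an isometry with $\BW^*\BW=\BI$ and range $\CH_Y$, and $\BW\BW^*=\BP$, the orthogonal projection onto $\CH_Y$. The functions $e_v:=\BW\delta_v$ form an orthonormal system in $\CH(G)$. Put
\[ \BI-\BT:=\BW\BG^{1/2}\BW^*+(\BI-\BP),\qquad \BT:=\BW(\BI-\BG^{1/2})\BW^*. \]
Since $\BG^{1/2}-\BI=(\BG-\BI)(\BG^{1/2}+\BI)^{-1}$ is compact, $\BT$ is compact; and since $\BW\BG^{1/2}\BW^*$ is invertible on $\CH_Y$ with inverse $\BW\BG^{-1/2}\BW^*$ while $\BI-\BT$ is the identity on $\CH_Y^\perp$, the operator $\BI-\BT$ has bounded inverse $\BW\BG^{-1/2}\BW^*+(\BI-\BP)$. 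Writing $\wt h_v=\BU\delta_v=\BW\BG^{1/2}\delta_v$, one computes $(\BI-\BT)^{-1}\wt h_v=\BW\BG^{-1/2}\BW^*\BW\BG^{1/2}\delta_v=\BW\delta_v=e_v$, so $\{(\BI-\BT)^{-1}\wt h_v\}=\{e_v\}$ is an orthonormal system; this is precisely the statement that $Y$ is a sparse subset.

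The routine part is the last step, which merely repackages the Gram matrix and is essentially the proof from \cite{RS09}. The heart of the matter is the balance in the preceding step: $R$-mildness is exactly the hypothesis that keeps the factor $\mu_v^{-1}\mu_w^{-1}$ uniformly bounded (by $R$), whereas the decay \eqref{3n} of the Green function at infinity — itself a consequence of $D>2$ via \eqref{varop} — is what lets the greedy choice drive the off-diagonal Gram entries to zero fast enough to beat that bound. Showing that these two requirements can be met simultaneously is the only genuine obstacle; everything else is bookkeeping.
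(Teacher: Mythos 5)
Your proof is correct and follows essentially the same route as the paper: a greedy inductive selection of $R$-mild vertices, using the decay \eqref{3n} to make the off-diagonal Gram entries square-summable and the $R$-mildness bound $\mu_v^2\ge R^{-1}$ to control the normalization, so that $\|\BM-\BI\|_{\GS_2}<1$. The only difference is that you carry out explicitly the extraction of the compact operator $\BT$ from the Gram matrix (via $\BW=\BU\BG^{-1/2}$), whereas the paper delegates this step to Theorem VI.3.3 of Gohberg--Krein and to \cite{RS09}; your version of that step is a correct unpacking of the same fact.
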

\begin{proof}
We fix an infinite subset $G'\subset G$, consisting of $R$-mild vertices. Choose any double sequence
$\{\vare_{mn}\},\ m,n\in\N; \ m\neq n$, of positive numbers, such that $\vare_{mn}=\vare_{nm}$ and $\sum\vare_{mn}^2<R^{-2}$.
Suppose that the vertices $v_1,\ldots,v_{n-1}$ are already chosen (here $v_1$ is arbitrary). Then, in view of \eqref{3n}, we can choose a point $v_n\in G'$ in such a way that
$|(h_{v_k},h_{v_n})|<\vare_{kn}$ for $k=1,\ldots,n-1$. As a result of this inductive procedure, we get a sequence of vertices $\{v_n\}\subset G'$, such that $(h_{v_n},h_{v_n})=\mu_{v_n}^2$ and
$|(h_{v_m},h_{v_n})|<\vare_{mn}$ if $m< n$. By \eqref{3m}, the same inequality holds for $m>n$. Since all the vertices
$v_n$ are $R$-mild, we have $|(\wt{h}_{v_n},\wt{h}_{v_n})|<R\vare_{mn}$.

It follows that the Gram matrix
$\BM=\{(\wt{h}_{v_n},\wt{h}_{v_n})\}$ is such that $\BM-\BI$ is Hilbert-Schmidt and, moreover, $\|\BM-\BI\|_{\GS_2}<1$. By Theorem VI.3.3 in \cite{GOKR}, this implies the sparseness of the set  $Y$, with $\BT\in\GS_2$; see
\cite{RS09}, section 6.2 (where the case of $G=\Z^d$ was analyzed) for more details.  \end{proof}

\section{Operators on $\Z^2$}\label{Z2setting}
It is well-known that a lot of complications arise in the study of the classical Schr\"odinger operator in $\R^2$, compared to the higher-dimensio\-nal case. They are related to the non-signdefiniteness of the fundamental solution, its logarithmic singularity, the absence of the proper sharp Sobolev embedding theorem, and so on.  One of the important consequences here is the absence of a direct analogy of the Birman-Schwinger principle \eqref{bsch}, since the closure  of the space of compactly supported functions in the metric of the Dirichlet integral is not a space of functions any more.

We encounter the same complication if we try to extend the general  reasoning in the previous section to the case of operators on combinatorial graphs with global dimension $D=2$. Similar to the case of $\R^2$, much more specific instruments are needed.

In this section we present an approach  to the eigenvalue analysis of the Schr\"odinger operator on $\Z^2$.

We consider the graph $G$ whose set of vertices $\CV$ is the lattice $\Z^2$ which we understand as embedded in the natural way into $\R^2$. Two vertices are said to be connected with an edge if they are the closest neighbors in $\R^2$, and the weight of any such edge is taken as $1$. It is convenient to identify a vertex $v\in \Z^2$ with its Cartesian co-ordinates
$x=(x_1,x_2)\in \Z^2\subset\R^2$.

\subsection{The Birman-Schwinger principle,  a Hardy type inequality, and the space $\CH_0$}
Following Section \ref{comb}, we introduce the quadratic form
\begin{equation}\label{2.Form}
    \ba_{\Z^2}[f]=\sum_{x,x'\in\Z^2, x\sim x'}|f(x)-f(x')|^2.
\end{equation}
This quadratic form is bounded in $\ell^2=\ell^2(\Z^2)$ and defines the self-adjoint operator $-\D$.  For a bounded real-valued function $V(x)$ on $\Z^2$ and a coupling constant $\a\ge0$,
we consider the Schr\"odinger operator $-\D-\a V$, and our concern is in the study of the behavior of $N_-(-\D-\a V)$ as $\a\to\infty.$

So far so good, however if we try to apply the Birman-Schwinger principle, as we did it above,  we fail. The reason for this lies in the fact that the space $\CH$, the closure of the space of finitely supported functions in the metric \eqref{2.Form}, is not a space of functions any more. In fact, it is easy to construct a family of functions $u_n$ on $\Z^2$,
converging to a nonzero constant point-wise, so that  $\ba_G[u_n]\to0.$ This is also related to the behavior of the heat kernel for $-\D$ on $\Z^2$. Standard calculations by means of the Fourier series show that the heat kernel $P(t;.,.)$ decays exactly as $t^{-1}$ as $t\to\infty$, so the global dimension equals $2$ and the Varopoulos theory does not apply.

In order to handle these inconveniences, it is sufficient to impose the Dirichlet boundary condition at one point, say at $x=\0=(0,0)$.

So, denote by $\ell^2_0$ the (obviously closed) subspace in $\ell^2$ consisting of functions with zero value at $\0$.
The quadratic form \eqref{2.Form} defines a self-adjoint operator $-\D_{0}$ in $\ell^2_0$, and we denote by $-\D_{0}-\a V$ the corresponding Schr\"odinger operator.

Since the subspace $\ell^2_0$ has codimension $1$ in $\ell^2$, the standard application of the variational principle gives

\begin{equation}\label{2.Comparison}
    N_-(-\D_0-\a V)\le N_-(-\D-\a V)\le N_-(-\D_0-\a V)+1,
\end{equation}
therefore, for our aims, it suffices to study the operator $-\D_0-\a V$.

The minor reduction above brings a great advantage:  applying the Birman-Schwinger principle becomes possible now. To explain this, we start by establishing a Hardy type inequality in $\ell^2_0$.

\begin{prop}\label{2.prop.Hardy}
For some constant $C$ and for any function $f\in \ell^2_0$ with compact support,
\begin{equation}\label{2.Hardy}
    \sum_{x\in\Z^2\setminus{(\0)}}|f(x)|^2|x|^{-2}(\log |x|+2)^{-2}\le C\ba_{\Z^2}[f].
\end{equation}
\end{prop}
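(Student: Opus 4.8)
The plan is to prove the discrete Hardy inequality \eqref{2.Hardy} by reducing it to a one-dimensional weighted Hardy inequality along a suitable ``radial'' exhaustion of $\Z^2$. The idea is standard for Hardy inequalities on $\Z^d$: group the lattice points into dyadic (or unit-width) annuli $A_k=\{x\in\Z^2: k\le |x| < k+1\}$ (or $2^k\le|x|<2^{k+1}$), and compare the left-hand side, which is essentially $\sum_k |x\sim A_k| \cdot w_k \cdot (\text{typical value of }|f|^2\text{ on }A_k)$, against the Dirichlet form, which controls the increments of $f$ across successive annuli.

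First I would set up the radialization. For a finitely supported $f\in\ell^2_0$, since $f(\0)=0$, write $f(x)$ as a telescoping sum of increments along a path from $\0$ to $x$; more robustly, introduce the averaged function $F(k) = \bigl(\frac{1}{\#A_k}\sum_{x\in A_k}|f(x)|^2\bigr)^{1/2}$, or better work with $M(k)=\max_{|x|\le k}|f(x)|$ or with the ``level sets'' of $f$. The key geometric facts about $\Z^2$ I would isolate are: (i) $\#A_k \asymp k$; (ii) any point in $A_k$ is connected by an edge to a point in $A_{k-1}\cup A_k\cup A_{k+1}$, and more importantly the ``boundary'' between the ball of radius $k$ and its complement is crossed by $\asymp k$ edges; (iii) the weight $w(x)=|x|^{-2}(\log|x|+2)^{-2}$ is essentially constant, $\asymp k^{-2}(\log k+2)^{-2}$, on each annulus $A_k$. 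Then $\sum_{x} |f(x)|^2 w(x) \lesssim \sum_k k\cdot k^{-2}(\log k+2)^{-2}\, (\text{avg of }|f|^2\text{ on }A_k)= \sum_k \frac{1}{k(\log k+2)^2}\,(\cdots)$, and the series $\sum_k \frac{1}{k(\log k+2)^2}$ converges --- this is exactly why the logarithmic correction is present and it is the discrete-$\Z^2$ analogue of the fact that $\int^\infty \frac{dr}{r(\log r)^2}<\infty$ while $\int^\infty \frac{dr}{r}=\infty$.

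The analytic heart is then a one-dimensional Hardy-type estimate: if $a_k\ge0$ is an ``increment'' sequence with $\sum_k k\, a_k^2 \le C\ba_{\Z^2}[f]$ (the Dirichlet form controls, annulus by annulus, the squared increments weighted by the number $\asymp k$ of crossing edges, via Cauchy--Schwarz on each annulus), and if $b_k = \sum_{j\le k} a_j$ bounds the typical size of $|f|$ on $A_k$, then one needs
\[
  \sum_k \frac{b_k^2}{k(\log k+2)^2}\ \le\ C'\sum_k k\, a_k^2 .
\]
This is a weighted discrete Hardy inequality of the form $\sum_k U_k\bigl(\sum_{j\le k}a_j\bigr)^2 \le C'\sum_k V_k a_k^2$ with $U_k = \frac{1}{k(\log k+2)^2}$ and $V_k=k$; it holds by the Muckenhoupt criterion precisely because $\sup_N \bigl(\sum_{k\ge N}U_k\bigr)\bigl(\sum_{k\le N}V_k^{-1}\bigr) <\infty$, and indeed $\sum_{k\ge N}\frac{1}{k(\log k+2)^2}\asymp \frac{1}{\log N+2}$ while $\sum_{k\le N}\frac1k\asymp \log N+2$, so the product is bounded. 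Equivalently one can just do the summation by parts directly. I would carry out: (a) the reduction to annuli and the weight estimate; (b) the extraction of the increment bound from $\ba_{\Z^2}[f]$ using that $\#\{$edges crossing $\{|x|=k\}\}\asymp k$ together with Cauchy--Schwarz; (c) the one-dimensional Hardy/Muckenhoupt step; (d) a careful treatment of the innermost annuli near $\0$, where the $(\log|x|+2)$ shift guarantees no singularity and the $f(\0)=0$ condition is used.

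The main obstacle I anticipate is step (b): translating the off-diagonal Dirichlet form $\sum_{x\sim x'}|f(x)-f(x')|^2$ into a clean bound on a radial increment sequence. Unlike the continuum case, there is no single radial coordinate, and $|f|$ need not be monotone or even close to monotone on annuli, so one must choose the right radial surrogate (level sets of $|f|$, or the max over balls, rather than naive averages) so that the number-of-boundary-edges-$\asymp k$ estimate can be combined with Cauchy--Schwarz without losing a logarithmic factor. A robust way around this is the coarea/layer-cake approach: for each threshold $t>0$ let $\Om_t=\{|f|>t\}$; then $\ba_{\Z^2}[f]\gtrsim \int_0^\infty t\,|\partial\Om_t|\,dt$ where $|\partial\Om_t|$ is the number of edges leaving $\Om_t$, and since $\0\notin\Om_t$ the set $\Om_t$ is bounded and its edge-boundary in $\Z^2$ satisfies the discrete isoperimetric-type bound $|\partial\Om_t|\gtrsim \bigl(\sum_{x\in\Om_t} w(x)\bigr)^{-1/2}$ —-- or more directly $|\partial\Om_t|\gtrsim$ (a radius-type quantity), which after integrating in $t$ and using $\sum_{|x|\le r}w(x)\lesssim 1$ uniformly in $r$ reproduces \eqref{2.Hardy}. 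I expect the authors' proof to follow one of these two routes; either way the convergence of $\sum 1/(k(\log k)^2)$ is the structural reason the statement is true.
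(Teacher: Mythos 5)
Your primary route --- annular decomposition plus a one-dimensional weighted Hardy inequality verified by the Muckenhoupt criterion --- is a genuinely different strategy from the paper's and correctly identifies the structural reason the inequality holds ($\sum_k k^{-1}(\log k+2)^{-2}<\infty$ against $\sum_{k\le N}k^{-1}\asymp\log N$). But as written the argument has a gap at exactly the step you flag as the main obstacle, and neither of your proposed workarounds closes it. To pass from $\sum_{x\in A_k}|f(x)|^2w(x)$ to a bound involving only a radial sequence you must control the oscillation of $f$ \emph{within} each annulus; the standard fix is a Poincar\'e inequality on $A_k$ (a discrete circle of $\asymp k$ points, Poincar\'e constant $\asymp k^2$), whose contribution $k^{-2}(\log k+2)^{-2}\cdot k^{2}\sum_{e\subset A_k}|df|^2\le\sum_{e\subset A_k}|df|^2$ fortunately sums directly into $\ba_{\Z^2}[f]$ --- but this step is absent from your proposal, and without it neither averages nor maxima over annuli are controlled by radial increments (one also needs a matching between adjacent annuli to extract $\sum_k k\,a_k^2\lesssim\ba_{\Z^2}[f]$). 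Worse, the coarea alternative rests on the inequality $\ba_{\Z^2}[f]\gtrsim\int_0^\infty t\,|\partial\Om_t|\,dt$, which is false: for the tent function $f(x)=(1-|x|/R)_+$ one has $\ba_{\Z^2}[f]=O(1)$ while $\int_0^\infty t\,|\partial\Om_t|\,dt\asymp R$. The discrete coarea formula identifies $\int_0^\infty|\partial\Om_t|\,dt$ with the $\ell^1$ energy $\sum_{x\sim x'}|f(x)-f(x')|$, and relating that to the quadratic form costs a Cauchy--Schwarz you cannot afford here; the correct capacitary route is Maz'ya's criterion, a different and heavier statement.

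For comparison, the paper's proof is a short reduction to the continuum: extend $f$ to a piecewise-linear function on $\R^2$ by the interpolation over lattice squares used in \cite{RS09} (which makes the continuous Dirichlet integral comparable to $\ba_{\Z^2}[f]$ and the weighted $L^2$ norm comparable to the discrete weighted sum), multiply by a cutoff equal to $0$ for $|x|<\frac12$ and to $1$ for $|x|\ge\frac34$ (legitimate since $f(\0)=0$), and apply the classical Hardy inequality in $\R^2$ with logarithmic weight to the interpolant. Your intrinsic discrete outline can be completed, but the Poincar\'e-on-annuli step and the increment extraction must actually be written out; as it stands the proposal is a plan whose central step is missing.
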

\begin{proof} The inequality \eqref{2.Hardy} is proved in a  way, similar to the discrete Hardy type inequality in Sect.4 in \cite{RS09}, by interpolating a function on $\Z^2$ to a function on $\R^2$. In all squares in $\R^2$ with vertices in $\Z^2$,  we apply the same piece-wise linear interpolation as in \cite{RS09}.  Additionally, for the four central squares, the  ones having the vertex $\0$, we multiply the interpolating function by the cut-off  $\varphi(|x|)$, which equals $0$ for $|x|<\frac12$ and equals $1$ for $|x|\ge \frac34$. The inequality \eqref{2.Hardy} follows now by applying the classical
 Hardy inequality in $\R^2$  with $\log$ term (valid for functions vanishing in a fixed neighborhood of the origin) to the interpolant.
\end{proof}

We define the space $\CH_0$ as the closure of the set of finitely supported functions in $\ell^2_0$ in the metric \eqref{2.Form}.
It follows from \eqref{2.Hardy}, that on this set the convergence with respect to \eqref{2.Form} implies the convergence in  $\ell^2$ with the weight as in \eqref{2.Hardy}, therefore $\CH_0$ is a space of functions.

Now we are able to define the Birman-Schwinger operator $\BB^{(0)}_{V,\Z^2}$ in the space $\CH_0$ by means of the quadratic form \eqref{bbV}, obviously, closed. By the general Birman-Schwinger principle and \eqref{2.Comparison}, we have
\begin{equation*}%\label{bschZ2}
 n(\a^{-1},\BB^{(0)}_{V,\Z^2})  \le N_-(-\D_0-\a V)\le n(\a^{-1},\BB^{(0)}_{V,\Z^2})+1.
\end{equation*}

\subsection{The Green function}\label{Z2Green}

It follows  from  \eqref{2.Hardy} that  the linear  functional $\varphi_v(f)=f(x)$ is continuous in $\CH_0$ and defines a unique function $h_x\in \CH_0$ such that
\begin{equation}\label{2.functional}
    (f,h_x)=f(x), \ \forall f\in\CH_0.
\end{equation}
Thus the function $h_x$ satisfies the equation \eqref{2}; it  is real-valued and will be called, similar to the considerations in Section  \ref{comb}, the \emph{Green function} for the operator $\D$ on $\Z^2$. The relation \eqref{3m} holds too, but at this point the analogy with Section  \ref{comb} stops. The reason for this is that we do not have the inequality \eqref{varop} any more, and the Hardy inequality \eqref{2.Hardy} does not imply that $h_x(y)\to 0$ as $|x-y|\to\infty$.
Moreover, the asymptotic formula \eqref{2.Green.asymp} below shows that
in $\Z^2$-setting this property fails.
Nevertheless, we define, again,  $\wt{h}_x=\m_x^{-1}h_x$, where $\m_x^2=(h_x,h_x)=h_x(x)$.
Again, as it was in the case of graphs with $D>2$ (see the end of Subsection \ref{green}), the function $\mu_x^{-1}\wt{h}_x$ is
the unique minimizer (in the space $\CH_0(\Z^2)$) of the functional \eqref{2.Form} under the
condition $f(x)=1$.

We are going to find an explicit expression for $h_x$.   To do this, we consider  first the fundamental solution for the Laplacian  on $\Z^2$ \emph{without the boundary condition} at $\0$, i.e., the function $\CG(x), \ x\in \Z^2,$ such that $\D \CG(x)=\d_{\0}^x$. Such function, if exists, is not unique, it is defined up to $\Z^2$-harmonic additive term. We are going to deal with a specific function $\CG$.

For $\Z^d$ with $d > 2$ such solution can be easily found by means of the
Fourier series, see the formula (6.1) in \cite{RS09}.
 In the 2-dimensional case, however, the integral in \cite{RS09}, (6.1), diverges, so a different approach is needed.

The first formula for $\CG(x)$ was found in \cite{McW}. We do not reproduce it since it is not convenient for further calculations.  We are going to use another formula for the fundamental solution, presented, for example, in the book \cite{Spitzer}, formula (1) in Chapter 15:
\begin{equation}\label{2.Fundamental}
    \CG(x)=(2\pi)^{-2}\int\limits_{(-\pi,\pi)^2}\frac{\cos(x\theta)-1}{Z(\theta)}d\theta,
\end{equation}
where
\begin{equation*}%\label{2.Zet}
  Z(\theta)=Z(\theta_1,\theta_2)  =1-\frac12(\cos\theta_1+\cos\theta_2), \ x\theta = x_1\theta_1+x_2\theta_2.
\end{equation*}
For our aims it is more  convenient to use the exponential form of \eqref{2.Fundamental}:
\begin{equation}\label{2.FundamentalCompl}
    \CG(x)=(2\pi)^{-2}\int\limits_{(-\pi,\pi)^2}\frac{\exp(ix\theta)-1}{Z(\theta)}d\theta.
\end{equation}

So, $\CG(\0)=0$;
it is also known, see \cite{Spitzer}, Chapter 15, that $ \CG(x)$ has the asymptotics
\begin{equation}\label{2.asympt}
     \CG(x)\sim \frac{1}{\pi} \bigl(2 \log |x| + \log 8 + 2\g\bigr), \ |x|\to\infty,
\end{equation}
uniformly in all directions ($\g$ is the Euler constant).

Now we construct the function $H_x(y)$, as follows:
\begin{equation}\label{2.Green.expl}
    H_x(y)=-\CG(x-y)+\CG(x)+\CG(y).
\end{equation}
One can easily check that $H_x(y)=H_y(x)$, $H_x(\0)=0$ and $-\D_yH_x(y)=\d_x^y$. It will be
shown below that $H_x=h_x$.

By the asymptotics \eqref{2.asympt},
\begin{equation}\label{2.Green.asymp}
    H_x(y)\sim \frac{1}{\pi} \bigl(2\log|x|+ \log 8 + 2\g\bigr), \ |y|\to\infty
\end{equation}
and
\begin{equation}\label{2.Green.asymp.diag}
    H_x(x)=2\CG(x)\sim \frac{2}{\pi} \bigl(2\log|x|+ \log 8 + 2\g\bigr), \ |x|\to\infty
\end{equation}

The following property is the most important one.

\begin{prop}\label{2.Prop.Green}
For any $x\in \Z^2\setminus\0,$ the function $H_x(y)$ belongs to $\CH_0$.
\end{prop}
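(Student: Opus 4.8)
The plan is to show $H_x \in \CH_0$ by exhibiting it as the limit, in the $\ba_{\Z^2}$-metric, of a sequence of finitely supported functions in $\ell^2_0$. Since $H_x$ need not tend to zero at infinity (indeed \eqref{2.Green.asymp} shows it converges to a nonzero constant), the natural candidates are cut-offs: let $\chi_R$ be the indicator of the ball $\{|y| \le R\}$, and consider $H_x^{(R)} := \chi_R \cdot H_x$, which is finitely supported and vanishes at $\0$. The claim will follow once I verify two things: (i) $\ba_{\Z^2}[H_x^{(R)}] $ stays bounded, equivalently that $H_x$ has finite Dirichlet energy, and (ii) $\ba_{\Z^2}[H_x^{(R)} - H_x^{(R')}] \to 0$ as $R, R' \to \infty$, so that $\{H_x^{(R)}\}$ is Cauchy. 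Actually it is cleaner to argue via a smoother truncation: replace the sharp cut-off by $\psi_R(|y|) H_x(y)$, where $\psi_R$ is a Lipschitz plateau function equal to $1$ on $|y|\le R$ and $0$ on $|y|\ge 2R$, with $|\psi_R(s)-\psi_R(s')| \le C|s-s'|/R$; this avoids boundary-layer difficulties.

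First I would establish the energy bound. Using the product rule for discrete differences, $H_x(y)\psi_R - H_x(y')\psi_R' = \psi_R\,(H_x(y)-H_x(y')) + H_x(y')(\psi_R - \psi_R')$, so by the elementary inequality $|a+b|^2 \le 2|a|^2 + 2|b|^2$,
\begin{equation*}
\ba_{\Z^2}[\psi_R H_x] \le 2\sum_{y\sim y'} |H_x(y)-H_x(y')|^2 + 2\sum_{y\sim y'} |H_x(y')|^2 |\psi_R(|y|)-\psi_R(|y'|)|^2.
\end{equation*}
For the first sum I would use that $H_x(y) = -\CG(x-y) + \CG(x) + \CG(y)$, so the discrete gradient of $H_x$ is, up to the additive shift, the discrete gradient of $y\mapsto \CG(y) - \CG(x-y)$; from the asymptotics \eqref{2.asympt} and standard estimates on the fundamental solution (the differences $\CG(y)-\CG(y')$ for $y\sim y'$ decay like $|y|^{-1}$, which is square-summable in two dimensions against the $|y|$-many lattice points on each circle — in fact $\sum |y|^{-2}$ diverges logarithmically, so one must be slightly more careful and use that the gradient of the \emph{harmonic} part cancels against $-\CG(x-\cdot)$ only for $|y|\gg|x|$, leaving an absolutely convergent tail). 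For the second sum, $|\psi_R(|y|)-\psi_R(|y'|)|^2 \le C R^{-2}$ and the sum ranges over $R \le |y| \le 2R$, which contains $O(R^2)$ points, while $|H_x(y')|^2 = O((\log R)^2)$ there; this gives a contribution $O(R^2 \cdot R^{-2} (\log R)^2) = O((\log R)^2)$, which is \emph{not} bounded. This is the main obstacle: the crude cut-off loses a logarithm.

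To repair this I would use a logarithmically graded cut-off instead: take $\psi_R$ to interpolate between $1$ at $|y| = R$ and $0$ at $|y| = R^2$, depending on $\log|y|$ linearly, so that $|\psi_R(|y|) - \psi_R(|y'|)| \le C/(|y|\log R)$ for $y\sim y'$ (using $|\,\log|y| - \log|y'|\,| \le C/|y|$). Then the offending sum becomes
\begin{equation*}
\sum_{R \le |y| \le R^2} |H_x(y)|^2 \frac{C}{|y|^2 (\log R)^2} \le \frac{C(\log R^2)^2}{(\log R)^2} \sum_{R\le|y|\le R^2} |y|^{-2} \le \frac{C'}{(\log R)^2}\cdot (\log R) \to 0,
\end{equation*}
since $\sum_{R\le|y|\le R^2}|y|^{-2} \asymp \log(R^2) - \log R = \log R$. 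So the tail energy of the cut-off tends to zero, giving both the uniform energy bound and the Cauchy property in one stroke. Finally, to identify the limit function with $H_x$ itself, I would note that $\psi_R H_x \to H_x$ pointwise, and combine the Cauchy property with the Hardy inequality \eqref{2.Hardy}, which forces the $\CH_0$-limit to coincide pointwise with $H_x$ on $\Z^2 \setminus \0$; since both vanish at $\0$, they agree everywhere. Hence $H_x \in \CH_0$. (As a corollary, taking $f = h_x$ in \eqref{2.functional} and using $-\D_y H_x(y) = \d_x^y$ with $H_x \in \CH_0$ yields $H_x = h_x$, the identification promised in the text.)
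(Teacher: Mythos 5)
Your overall strategy (show $\ba_{\Z^2}[H_x]<\infty$, then approximate by a logarithmically graded cutoff) is the same as the paper's, but both halves of your argument have genuine gaps.

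First, the finiteness of the Dirichlet energy is never actually established. You correctly observe that the naive gradient bound $|\CG(y)-\CG(y')|\lesssim|y|^{-1}$ for $y\sim y'$ only gives $\sum|y|^{-2}$, which diverges logarithmically in two dimensions, and you then appeal to a cancellation between the gradients of $\CG(\cdot)$ and $\CG(\cdot-x)$ ``for $|y|\gg|x|$''. That cancellation is indeed the right idea, but to exploit it you need a second-difference estimate of the form $|d_j\CG(y)-d_j\CG(y-x)|\le C|x|\,|y|^{-2}$, which does not follow from the pointwise asymptotics \eqref{2.asympt} alone; it requires asymptotics of $\CG$ with controlled derivatives, which you neither state nor prove. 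The paper avoids this entirely: from \eqref{2.FundamentalCompl} the difference $d_jH_x(y)$ is the $y$-th Fourier coefficient of $K(\theta)=(e^{i\1_j\theta}-1)(e^{-ix\theta}-1)/Z(\theta)$, which is bounded on the torus because both numerator factors vanish to first order at $\theta=0$ while $Z(\theta)\asymp|\theta|^2$; Plancherel then gives $d_jH_x\in\ell^2(\Z^2)$ with no delicate real-space cancellation needed.

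Second, your cutoff error estimate is wrong as written. You bound $|H_x(y)|^2$ on the annulus $R\le|y|\le R^2$ by $C(\log R^2)^2$, but then the displayed chain
\begin{equation*}
\frac{C(\log R^2)^2}{(\log R)^2}\sum_{R\le|y|\le R^2}|y|^{-2}\ \le\ \frac{C'}{(\log R)^2}\cdot(\log R)
\end{equation*}
is false: $(\log R^2)^2/(\log R)^2=4$, so with your bound the sum is $O(\log R)$, which \emph{diverges}. The estimate is rescued only by the fact, which you miss, that $H_x(y)$ does not grow like $\log|y|$: by \eqref{2.Green.asymp} it tends to the finite constant $\frac1\pi(2\log|x|+\log 8+2\g)$ as $|y|\to\infty$, i.e.\ $H_x$ is \emph{bounded} in $y$ for fixed $x$ (the logarithm is in $|x|$, not $|y|$). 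Replacing $O((\log R)^2)$ by $O(1)$ gives $O(1/\log R)\to0$ and the log-graded cutoff then works exactly as in the paper (whose $F_m$ is the same construction, run from $1$ to $m$). Your final identification of the limit with $H_x$ via the Hardy inequality is fine.
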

\begin{proof} We denote $\1_2=(0,1)$,  fix  $x\in \Z^2\setminus\0$ and consider the expression  $d_2H_x(y)=H_x(y+\1_2)-H_x(y).$ By \eqref{2.FundamentalCompl},
\begin{gather}\label{2.calculation.2}\nonumber
% \nonumber to remove numbering (before each equation)
  d_2H_x(y)= (2\pi)^{-2}\int\limits_{(-\pi,\pi)^2}\frac{1}{Z(\theta)} \left(e^{i(y+\1_2)\theta}-e^{iy\theta}-e^{i(y-x+\1_2)\theta}+e^{i(y-x)\theta}\right) d\theta \\
=  -(2\pi)^{-2}\int\limits_{(-\pi,\pi)^2}\frac{(e^{i\1_2\theta}-1)(e^{-ix\theta}-1)}{Z(\theta)} \exp (iy\theta) d\theta.
\end{gather}
Considered as a function on the torus $(-\pi,\pi)^2$, the denominator in \eqref{2.calculation.2} has the only zero at the point $\theta=0,\ $ and $Z(\theta)\asymp|\theta|^2$ near $\theta=0$. Both functions $e^{i\1_2\theta}-1$, $e^{-ix\theta}-1$ vanish at $\theta=0$, the main parts being degree one homogeneous in $\theta$. Therefore the function
\begin{equation*}
    K(\theta)=\frac{(e^{i\1_2\theta}-1)(e^{-ix\theta}-1)}{Z(\theta)}
\end{equation*}
is bounded on $(-\pi,\pi)^2$, in particular, it belongs to $L^2((-\pi,\pi)^2)$.  So, the expression \eqref{2.calculation.2} represents the Fourier coefficient of the function $K(\theta)$, and by the Plancherel equality, we have
\begin{equation*}
    d_2H_x(\cdot)\in \ell^2(\Z^2).
\end{equation*}
In a similar way, the function $d_1H_x(y)=H_x(y+\1_1)-H_x(y)$ (where $\1_1=(1,0)$) belongs to
$\ell^2(\Z^2)$. These two facts mean that $\ba_{\Z^2}[H_x]<\infty$ for any $x$.

It remains to show that the function $H_x$ can be approximated in the metric $\ba_G$ by finitely supported functions.
 This is done in the standard way, by setting $H_x^{(m)}(y)=H_x(y)F_m(|y|)$, where $F_m(s)=1, s\le1,$ $F_m(s)=(\log m)^{-1}(\log m -\log s),  s\in(1,m),$ and $F_m(s)=0, s\ge m$. Using that $H_x$ is bounded and the above properties of $d_jH_x$, it is easy to show that $\ba_{\Z^2}[H_x-H_x^{(m)}]\to 0$ as $m\to  \infty$.
\end{proof}
So, the functions $H_x$ and $h_x$, both in $\CH_0$, are the solutions of the same equation. Due to the uniqueness in the construction of $h_x$, we have
\begin{equation*}%\label{2.H=h}
    h_x=H_x,
\end{equation*}
and all the properties established in this section for the functions $H_x$ hold now for the functions $h_x$.

\subsection{Sparse sets in $\Z^2$}\label{spZ2}
The definition of a sparse set given in Subsection \ref{sparse potentials}, requires a minor modification in the case of $\Z^2$: the space $\CH$ should be replaced by $\CH_0$. The next statement is an analogue of Proposition \ref{exist}. However, the proof is a little bit different, it uses the asymptotic formula \eqref{2.asympt} and its consequences.
\begin{prop}\label{spsZ2} The graph $\Z^2$ contains a sparse set.
 \end{prop}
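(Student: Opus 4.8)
The plan is to follow the proof of Proposition~\ref{exist}, constructing the sparse set as a very rapidly escaping sequence $Y=\{v_n\}\subset\Z^2\setminus\0$ for which the normalized Green functions $\wt{h}_{v_n}$ have a Gram matrix close to the identity. The essential new difficulty, compared with the case $D>2$, is that \eqref{3n} is no longer at our disposal: the Green functions on $\Z^2$ do \emph{not} tend to zero at infinity (cf.\ \eqref{2.Green.asymp}). Hence the qualitative inductive choice used in Proposition~\ref{exist} must be replaced by a quantitative estimate drawn from the explicit formula for $h_x$ and the asymptotics \eqref{2.asympt}.

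First I would record the explicit form of the scalar products. Since $h_x=H_x$ (Proposition~\ref{2.Prop.Green} and the remark after it), combining \eqref{3m}, \eqref{2.Green.expl} and $\m_x^2=h_x(x)=2\CG(x)$ (see \eqref{2.Green.asymp.diag}) gives, for $x,y\in\Z^2\setminus\0$,
\[
 (\wt{h}_x,\wt{h}_y)_{\CH_0}=\frac{h_x(y)}{\m_x\m_y}
 =\frac{\CG(x)+\CG(y)-\CG(x-y)}{2\sqrt{\CG(x)\CG(y)}},\qquad (\wt{h}_x,\wt{h}_x)_{\CH_0}=1.
\]
The decisive point is a cancellation: because $\CG$ grows only logarithmically, if $|y|\gg|x|$ then $|x-y|/|y|\to1$ and so, by the sharp form of \eqref{2.asympt} (the remainder $\CG(z)-\tfrac1\pi(2\log|z|+\log8+2\g)$ tends to $0$ as $|z|\to\infty$; see \cite{Spitzer}, Ch.~15), one has $\CG(x-y)-\CG(y)\to0$, while $\CG(y)\to\infty$. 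Therefore, with $x$ fixed,
\[
 (\wt{h}_x,\wt{h}_y)_{\CH_0}=\tfrac12\,\sqrt{\CG(x)/\CG(y)}+\frac{\CG(y)-\CG(x-y)}{2\sqrt{\CG(x)\CG(y)}}\longrightarrow0\qquad(|y|\to\infty),
\]
the surviving numerator term $\CG(x)$ being absorbed by one factor $\sqrt{\CG(x)}$ in the denominator.

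Next I would build $Y=\{v_n\}$ inductively, forcing $|v_n|$ to grow so fast that the off-diagonal Gram entries become square-summable with small total. Take $v_1$ with $\CG(v_1)\ge1$; given $v_1,\dots,v_{n-1}$, take $|v_n|$ large enough that $\CG(v_n)\ge1$, $\CG(v_n)\ge 16^{n}\max_{m<n}\CG(v_m)$, and $\max_{m<n}|\CG(v_m-v_n)-\CG(v_n)|\le 2^{-n}$ — all possible since $\CG(z)\to\infty$ and, for each fixed $x$, $\CG(x-z)-\CG(z)\to0$ as $|z|\to\infty$. An elementary estimate then yields $|(\wt{h}_{v_m},\wt{h}_{v_n})_{\CH_0}|\le 2^{-n}$ for all $m<n$, whence the Gram matrix $\BM=\{(\wt{h}_{v_m},\wt{h}_{v_n})_{\CH_0}\}$ satisfies $\BM-\BI\in\GS_2$ and $\|\BM-\BI\|_{\GS_2}^2\le 2\sum_{n\ge2}(n-1)4^{-n}<1$. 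By Theorem~VI.3.3 in \cite{GOKR}, just as at the end of the proof of Proposition~\ref{exist} but with $\CH(G)$ replaced by $\CH_0$, this shows that $Y$ is a sparse set, with the associated operator $\BT$ in $\GS_2$.

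I expect the cancellation in the second step to be the main obstacle. The crude bound $\CG(x)+\CG(y)-\CG(x-y)\le\CG(x)+\CG(y)$ is useless here, since it makes $(\wt{h}_x,\wt{h}_y)_{\CH_0}$ blow up once $\CG(y)\gg\CG(x)$; everything rests on the fact that the fundamental solution on $\Z^2$ grows slowly enough that $\CG(x-y)-\CG(y)\to0$ whenever $|x-y|/|y|\to1$, and on using \eqref{2.asympt} in the sharp form (error $o(1)$, not merely $o(\log|z|)$). Once this is in hand, the rapid growth of $|v_n|$ takes care of the rest, and the concluding part of the argument is the one already used for Proposition~\ref{exist}.
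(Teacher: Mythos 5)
Your proposal is correct and follows essentially the same route as the paper: the decay $(\wt{h}_x,\wt{h}_y)\to0$ for fixed $x$ (which in the paper is read off from the asymptotics \eqref{2.Green.asymp} and \eqref{2.Green.asymp.diag}, and which you re-derive from \eqref{2.Green.expl} via the cancellation $\CG(y)-\CG(x-y)\to0$), followed by the inductive selection of a rapidly escaping sequence and the Gram-matrix/Hilbert--Schmidt argument via Theorem~VI.3.3 of \cite{GOKR}. Your explicit quantitative choice of the $v_n$ is just a concrete instance of the paper's choice of $\vare_{mn}$ with $\sum\vare_{mn}^2<1$, and your remark that the sharp ($o(1)$-error) form of \eqref{2.asympt} is needed is a correct and welcome point of care.
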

 \begin{proof}
For the functions $\wt{h}_x$ introduced in Subsection \ref{Z2Green}, we have
\begin{equation*}
    (\wt{h}_x, \wt{h}_y)=h_x(x)^{-\frac12}h_y(y)^{-\frac12}h_x(y).
\end{equation*}
For a fixed $x\in\Z^2, $ we use the asymptotics \eqref{2.Green.asymp} and \eqref{2.Green.asymp.diag} as $y\to \infty$.
This gives
\begin{equation}\label{2.orthog}
      (\wt{h}_x, \wt{h}_y)\sim\m_x^{-1} (\log|y|)^{-\frac12}(2\log|x|+ \log 8 + 2\g).
\end{equation}
It is clear from \eqref{2.orthog} that $ (\wt{h}_x, \wt{h}_y)\to 0$ as $x$ is fixed and $y\to\infty$.

The rest of the proof uses the same inductive procedure as in Subsection \ref{sps}.
Fix a double sequence $\{\vare_{mn}\},\ m,n\in\N;\ m\neq n,$ of positive numbers, such that
$\vare_{mn}=\vare_{nm}$ and $\sum\vare_{mn}^2<1$.
Suppose that the points $x_1,\ldots,x_{n-1}$ are already chosen ($x_1$ is arbitrary). In view of \eqref{2.orthog},
we can choose a point $x_n\in\Z^2\setminus\0$ such that
\begin{equation}\label{3.ineq}
    |(\wt{h}_{x_m}, \wt{h}_{x_n})|<\vare_{mn},\qquad \forall m< n.
\end{equation}
As a result of this inductive procedure, we obtain an infinite sequence $x_n\in\Z^2\setminus\0$ such that \eqref{3.ineq} is valid for all $m,n\in\N;\ m< n.$
By symmetry, this inequality is satisfied also for $m>n$.

It follows (exactly as it was in Subsection \ref{sps}) that the Gram matrix $\BM=\{(\wt{h}_{x_m}, \wt{h}_{x_n})\}$ is such that $\|\BM-\BI\|_{\GS_2}<1$, and hence,
by Theorem VI.3.3 in \cite{GOKR}, the set $Y=\{x_n\}$ is sparse.
\end{proof}

Now we define a sparse potential on $\Z^2$ as such whose support is a sparse subset.
The spectral properties of the corresponding operator $\BB^{(0)}_{V,\Z^2}$, and hence, those of the
operator $-\D_0-\a V$, are studied in the same way as this was done in Section \ref{comb}.
This leads to the analogues of Theorems \ref{bound-sparse} and \ref{sppot}. The next statement
is a combination of these both results. The proof remains the same and we skip it. We only remind that the operator $\BT$ in the formulation is the one, appearing in the definition of a sparse subset, cf. Subsection \ref{sparse potentials}.

\begin{thm}\label{Z2-sparse}
Let $V\ge 0$ be a sparse potential on $\Z^2$. Then

1. The corresponding Birman-Schwinger operator $\BB^{(0)}_{V,\Z^2}$ is bounded if and only if the function $V$ is bounded, and is compact if and only if $V\to0$.

Moreover, \[ C\|V\|_{\ell^\infty}\le\|\BB^0_{V,\Z^2}\|\le C'\|V\|_{\ell^\infty},\]
where $C=\|(\BI-\BT)^{-1}\|^{-2}$ and $C'=\|\BI-\BT\|^2$. In the case of compactness, the following two-sided estimate, with the same constants $C,C'$, is valid for the eigenvalues $\l_n(\BB^{(0)}_{V,\Z^2})$:
\[ CV_n\le \l_n(\BB^{(0)}_{V,\Z^2})\le C'V_n.\]

\vs
2. If, in addition, the numbers $V_n$
form a moderately varying sequence, then
\begin{equation*}%\label{sp1}
    \l_n(\BB^0_{V,\Z^2})\sim V_n.
\end{equation*}
\end{thm}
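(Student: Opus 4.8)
The plan is to mirror, essentially verbatim, the argument from Section \ref{comb} that proved Theorems \ref{bound-sparse} and \ref{sppot}, checking at each point that the only structural ingredients used there survive in the $\Z^2$-setting once $\CH(G)$ is replaced by $\CH_0$. The crucial fact making the transfer possible is that, although the Varopoulos inequality \eqref{varop} is no longer available, the Hardy-type inequality \eqref{2.Hardy} of Proposition \ref{2.prop.Hardy} already guarantees that $\CH_0$ is a genuine space of functions, that each point evaluation functional $\varf_x$ is continuous on $\CH_0$, and hence (Subsection \ref{Z2Green}) that the normalized Green functions $\wt h_x$ and the identity $(f,\wt h_x)_{\CH_0}=\m_x^{-1}f(x)$ exist exactly as before. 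Since Proposition \ref{spsZ2} supplies a sparse set in $\Z^2$ in the modified sense (compact $\BT$ with $\BI-\BT$ invertible and $\{e_x=(\BI-\BT)^{-1}\wt h_x\}$ orthonormal), the whole algebraic machinery of Subsection \ref{sparse potentials} applies without change.

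Concretely, the first step is to record the factorization of the Birman--Schwinger form. For a sparse potential $V$ supported on $Y_V$, the form \eqref{bbV} rewrites as $\bb_{V,\Z^2}[f]=\sum_{x\in Y_V}V(x)|(f,\wt h_x)_{\CH_0}|^2$ (up to the harmless $\m_x$ normalization absorbed in $\wt h_x$), and with $\BN_V=\sum_{x\in Y_V}\sqrt{V(x)}(\cdot,e_x)e_x$ one gets, exactly as in \eqref{bv}, the identity $\BB^{(0)}_{V,\Z^2}=(\BI-\BT)\BN_V^2(\BI-\BT^*)$ in $\CH_0$. Part 1 of the theorem then follows as in Theorem \ref{bound-sparse}: boundedness and compactness of $\BB^{(0)}_{V,\Z^2}$ are equivalent to the same properties of $\BN_V^2$, i.e.\ to $V$ being bounded, resp.\ $V\to0$; and since $\BN_V^2$ is the diagonal operator with entries $V(x)$ in the orthonormal basis $\{e_x\}$, the two-sided norm and eigenvalue bounds with constants $C=\|(\BI-\BT)^{-1}\|^{-2}$, $C'=\|\BI-\BT\|^2$ are immediate from the min-max principle applied to $\BB^{(0)}_{V,\Z^2}=(\BI-\BT)\BN_V^2(\BI-\BT^*)$.

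For Part 2 the argument is that of Theorem \ref{sppot}: the non-zero spectrum of $\BB^{(0)}_{V,\Z^2}$ coincides with that of $\BM_V=\BN_V(\BI-\BT^*)(\BI-\BT)\BN_V=\BN_V(\BI+\BS)\BN_V$ with $\BS=-\BT-\BT^*+\BT^*\BT$, which is self-adjoint and compact since $\BT$ is. One then invokes Proposition \ref{krein} (Krein's theorem) with $\BH=\BN_V$: $\BH\ge0$, $\rank\BH=\infty$ because $Y_V$ is infinite, and its non-zero eigenvalues are precisely the $V_n$, which are assumed moderately varying; hence $\l_n(\BM_V)\sim\l_n(\BN_V)^2=V_n$, giving $\l_n(\BB^{(0)}_{V,\Z^2})\sim V_n$.

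Honestly, there is no real obstacle once the $\Z^2$-specific preliminaries of Subsections \ref{Z2Green}--\ref{spZ2} are in place: the remaining work is purely operator-theoretic and is identical to Section \ref{comb}. The only point deserving a word of care — and the reason the authors ``skip'' the proof rather than leave it trivial — is the normalization bookkeeping: in $\Z^2$ the quantities $\m_x^2=h_x(x)=2\CG(x)$ grow logarithmically (see \eqref{2.Green.asymp.diag}), so one must keep track of the $\m_x$ factors when passing between $h_x$ and $\wt h_x$ in the form \eqref{bbV}; but these constants are absorbed into the definition of $e_x$ and $\BN_V$ and do not affect the final asymptotics, exactly as they did not in Section \ref{comb}. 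Hence the statement follows, and the proof is indeed the same as before.
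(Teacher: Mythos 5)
Your proposal follows exactly the route the paper itself takes: the authors dispose of Theorem~\ref{Z2-sparse} with ``the proof remains the same and we skip it,'' and your transcription of the Section~\ref{comb} argument (the factorization $\BB^{(0)}_{V,\Z^2}=(\BI-\BT)\BN_V^2(\BI-\BT^*)$ for Part~1, Krein's theorem via Proposition~\ref{krein} for Part~2) is precisely what is intended. In terms of approach there is nothing to compare.

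However, the one point you single out in your closing paragraph --- the normalization constants $\m_x$ --- is exactly where the transfer is \emph{not} automatic, and the way you dismiss it does not work. Since $(f,\wt h_x)=\m_x^{-1}f(x)$, the Birman--Schwinger form is $\bb_{V,\Z^2}[f]=\sum_{x\in Y_V}V(x)|f(x)|^2=\sum_{x\in Y_V}V(x)\,\m_x^2\,|(f,\wt h_x)|^2$. You may of course ``absorb'' the $\m_x^2$ into $\BN_V$, but then its diagonal entries become $V(x_n)\m_{x_n}^2$ rather than $V(x_n)$ (the $e_x=(\BI-\BT)^{-1}\wt h_x$ themselves are pinned down by the requirement of orthonormality and cannot take up the factor). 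In Section~\ref{comb} this discrepancy is harmless because $\m_v^2=h_v(v)$ is bounded above (via \eqref{varop}) and below (on $R$-mild vertices), so it only perturbs constants. On $\Z^2$, by \eqref{2.Green.asymp.diag}, $\m_x^2=2\CG(x)\sim\tfrac{4}{\pi}\log|x|\to\infty$ along any infinite --- in particular any sparse --- subset. Concretely, testing on $f=\wt h_{x_n}$ gives $\bb_{V,\Z^2}[\wt h_{x_n}]\ge V(x_n)\m_{x_n}^2\,\|\wt h_{x_n}\|^2$, so already $V\equiv\const$ on $Y_V$ produces an unbounded form, and for decaying $V$ (with $V(x_n)\m_{x_n}^2\to0$) the Krein argument yields $\l_n\sim V(x_n)\m_{x_n}^2$ after re-enumeration, not $\l_n\sim V_n$. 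To obtain the theorem as literally stated one must either show the $\m_{x_n}^2$ stay bounded (they do not) or build the weights $h_{x_n}(x_n)$ into the quantities $V_n$; asserting that these factors ``do not affect the final asymptotics, exactly as they did not in Section~\ref{comb}'' is the gap, since the reason they were harmless there (two-sided boundedness of $\m_v$) is precisely what fails on $\Z^2$.
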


\section{operators on metric graphs}\label{metric}
Let $\G$ be a metric graph, with edge lengths $l_e$. As in \cite{RS10}, we associate with $\G$ the combinatorial graph $G=G(\G)$, with the same sets $\CV,\CE$ and the same connection relations, and we suppose the general conditions on $G$ formulated in the beginning of Section \ref{comb} to be fulfilled. To any edge $e$ of $G(\G)$ we assign the weight $g_e=l_e^{-1}$. The basic Hilbert space is now $L^2(\G)$, with respect to the measure induced by the Lebesgue measure on the edges.

The Sobolev space $\CH^1(\G)$ consists of all continuous functions $\varf$ on $\G$, such that $\varf\in H^1(e)$ on each edge and $\int_\G(|\varf'|^2+|\varf|^2)dx<\infty$.
The operator $-\D$ on $\G$ is defined via its quadratic form $\ba_\G[\varf]=\int_\G(|\varf'|^2dx$, with the form-domain $\CH^1(\G)$. The Laplacian $\D$ acts as $\varf''$ on each edge, and the functions from $\dom \D$ satisfy the natural (Kirchhoff) conditions at each vertex.

 Here we list our assumptions about the graph $\G$.
\begin{assump} \label{ass3.1}One of the following holds:\\
(i) The edge lengths $l_e$ of the graph $\G$ are bounded from above, the corresponding combinatorial graph $G(\G)$ has global dimension $D>2$ and contains a sparse subset, say $Y$. \\
(ii) The graph $G(\G)$ is $\Z^2$.%, and $Y$ is a sparse set in $\Z^2$.
\end{assump}

To define the Birman-Schwinger operator, we need to consider two cases of Assumption \ref{ass3.1} separately.

In the case (i),  by theorem 4.1 in \cite{RS10}, the graph $\G$ has the same global dimension $D$. Therefore the inequality of the form \eqref{varop}, with      $\ba_\G$ replacing $\ba_G$,  holds for all functions in $\CH^1(\G)$ with compact support, and consequently the space $\CH(\G)$, the closure of the set of compactly supported functions in the metric $\ba_\G$, is a space of functions (see \cite{RS10}, where the details are presented.)

In the case (ii), i.e., $G(\G)=\Z^2$, one should consider the subspace $\CH^1_0(\G)$ in $\CH^1(\G)$, consisting of functions, vanishing at the vertex $\0$. A Hardy type inequality, derived easily from \eqref{2.Hardy}, implies that the space $\CH_0(\G)$, the closure of  the set of compactly supported functions in $\CH^1_0(\G)$ in the metric $\ba_{\G}$, is a space of functions as well.

Further on, to unify notations, we suppress the subscript $0$ when dealing with $\CH_0(\G)$, so, for $G=\Z^2$, $\CH(\G)$ means $\CH_0(\G)$.

 An analogue of the quadratic form \eqref{bbV} is given by
\begin{equation}\label{bbVmetr}
    \bb_{V,\G}[\varf]=\int_\G V|\varf|^2dx,
\end{equation}
where $V\ge 0$ is a function in $L^1(\G)$.
We denote by $\BB_{V,\G}$  the operator in $\CH(\G)$, generated by the
quadratic form \eqref{bbVmetr}.

 % By proposition \ref{exist} (or \ref{spsZ2}) we can find on $G(\G)$ a sparse potential, say $\wh{V}$, with $Y_{\wh{V}}=Y$.
   We will construct  potentials $V$ on $\G$, whose support is a vicinity (in $\G$) of the set $Y$.
By using such potentials, we will be able to prove the following statements that can be considered as analogues of theorems \ref{bound-sparse} and \ref{sppot}.

\begin{thm}\label{cont2.1}
Let for a graph $\G$ Assumption \ref{ass3.1} be satisfied.
Let a sequence $p_n\searrow0$ , such that $n^{1/2}p_n\to0$, be given. Then there exists a function $V\in L^1(\G),\ V\ge 0,$ such that
\begin{equation*}%\label{1b}
    C p_n\le \l_n(\BB_{V,\G})\le C'p_n,\qquad \forall n\in\N,
\end{equation*}
where the constants $C_0, C\in(0,\infty)$ do not depend on the sequence $\{p_n\}$.
\end{thm}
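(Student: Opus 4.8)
The plan is to reduce the metric-graph problem to the combinatorial one already solved in Sections~\ref{comb} and~\ref{Z2setting}. I would build $V$ as a sum of ``bumps'', one supported on a small edge-neighborhood $\om_n$ of each vertex $y_n\in Y$ (the $\om_n$ pairwise disjoint, which is possible since $Y$ is sparse and the $l_e$ are bounded). On each $\om_n$ the bump is a constant $c_n$ times the indicator of $\om_n$, with $c_n$ chosen so that $c_n\cdot\meas(\om_n)$ matches the desired size; the normalization $n^{1/2}p_n\to0$ is exactly the ``moderately varying up to rearrangement'' type hypothesis needed to keep $V\in L^1(\G)$ and to land in the regime where the sparse-potential asymptotics apply.

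The key analytic step is a two-sided comparison between the quadratic form $\bb_{V,\G}$ on $\CH(\G)$ and a diagonal form on the combinatorial side. First I would show that for $\varf\in\CH(\G)$ the local average $\varf_{\om_n}:=\meas(\om_n)^{-1}\int_{\om_n}\varf\,dx$ satisfies
\begin{equation*}
  \int_{\om_n}V|\varf|^2\,dx \asymp c_n\meas(\om_n)\,|\varf_{\om_n}|^2 + (\text{small error controlled by }\ba_\G\text{ on }\om_n),
\end{equation*}
using a Poincar\'e-type inequality on the small piece $\om_n$ together with the fact that restricting $\varf\mapsto\varf(y_n)$ (or the average) is continuous on $\CH(\G)$ by the Varopoulos/Hardy inequality available under Assumption~\ref{ass3.1}(i)/(ii). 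This identifies, up to form-equivalence and a compact perturbation, $\BB_{V,\G}$ with an operator of the shape $(\BI-\BT)\BN^2(\BI-\BT^*)$ exactly as in \eqref{bv}, where $\BN^2$ is diagonal with entries comparable to $c_n\meas(\om_n)\asymp p_n$ and $\BT$ comes from the near-orthogonality of the functions $h_{y_n}$ guaranteed by sparseness of $Y$ (Propositions~\ref{exist} and~\ref{spsZ2}).

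Once that structural identity is in place, Theorem~\ref{bound-sparse} (or Theorem~\ref{Z2-sparse} in the $\Z^2$ case) gives the two-sided bound $C p_n\le\l_n(\BB_{V,\G})\le C' p_n$ with $C,C'$ depending only on $\|(\BI-\BT)^{\pm1}\|$, hence not on $\{p_n\}$. The main obstacle I expect is the comparison on each $\om_n$: one must verify that the error terms, summed over $n$, assemble into a genuinely compact (ideally Hilbert--Schmidt with small norm) operator on $\CH(\G)$, so that the ``$(\BI-\BT)$'' factor is invertible; this requires choosing the neighborhoods $\om_n$ shrinking fast enough relative to the growth of $h_{y_n}$ and using the continuity of the trace functionals quantitatively. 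The rest --- $V\ge0$, $V\in L^1$, measurability of the construction --- is routine given $n^{1/2}p_n\to0$.
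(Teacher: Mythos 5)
Your overall route is the one the paper takes (the paper writes out the proof for Theorem~\ref{cont2.3} and notes that Theorem~\ref{cont2.1} is analogous): you use essentially the potential \eqref{pot}, i.e.\ constant bumps of total mass $\asymp p_n$ on disjoint $\vare_n$-neighborhoods of the sparse vertices, and you aim to reduce to Theorem~\ref{bound-sparse} by comparing $\bb_{V,\G}$ with the diagonal combinatorial form. But there is a genuine gap precisely at the step you flag as the main obstacle, and your proposed tool does not close it. The exact identity is $\int_{\om_n}V|\varf|^2=c_n\meas(\om_n)|\varf_{\om_n}|^2+c_n\int_{\om_n}|\varf-\varf_{\om_n}|^2$, and bounding the second term by Poincar\'e, $c_n\int_{\om_n}|\varf-\varf_{\om_n}|^2\le Cc_n\vare_n^2\int_{\om_n}|\varf'|^2$, only majorizes the assembled error by the operator $\sum_n Cc_n\vare_n^2 Q_n$, where $Q_n$ is generated by $\int_{\om_n}|\varf'|^2$ relative to $\ba_\G$. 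That majorant is \emph{not} compact: on the infinite-dimensional subspace of functions supported in a single $\om_n$ one has $Q_n=\BI$, so each $Cc_n\vare_n^2$ is a point of infinite multiplicity. A majorant that is merely small in norm, say of norm $\d$, is useless for a two-sided bound $Cp_n\le\l_n\le C'p_n$ holding for \emph{all} $n$, because $p_n\to0$ while the additive error only gives $\l_n\le C'p_n+\d$. What is actually needed is that the error operator itself be compact with eigenvalues decaying faster than $p_n$; this requires keeping the full spectral information of the one-dimensional pieces (eigenvalues $\asymp c_n\vare_n^2k^{-2}$), which is exactly what the paper's Calogero-type estimate (Proposition~\ref{cal} and \eqref{estim1}) delivers after the orthogonal splitting $\CH(\G)=\CH_{\pl}\oplus\CH_\CD$, together with the freedom to take $\vare_n$ decaying as fast as one likes. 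Your plan of ``choosing $\om_n$ shrinking fast enough'' is the right instinct, but Poincar\'e alone cannot convert a norm-small bound into the required eigenvalue decay. Note also that this error enters \emph{additively}, not through the $(\BI-\BT)$ conjugation: the invertibility of $\BI-\BT$ is already guaranteed by sparseness of $Y$ and is not where the difficulty sits.

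A secondary issue: you replace the point evaluations $\varf\mapsto\varf(v_n)$ --- in terms of which sparseness and the Green functions $h_{v_n}$ are defined --- by the local averages $\varf\mapsto\varf_{\om_n}$, so you would additionally have to prove that the normalized representers of these averaged functionals in $\CH(\G)$ still form an almost orthonormal system. The paper sidesteps this entirely: since the weights are $g_e=l_e^{-1}$, restriction to $\CV$ identifies $\CH_{\pl}$ isometrically with $\CH(G(\G))$, so the dominant term of $\bb_{V,\G}[\varf_{\pl}]$ is \emph{literally} the combinatorial form $\bb_{\wh V,G(\G)}[\varf\res\CV]$ and Theorem~\ref{bound-sparse} applies verbatim; the cross term $2\re\,\bb_{V,\G}[\varf_{\pl},\varf_\CD]$ is then absorbed by Cauchy--Schwarz into the two pieces already controlled. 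If you repair the compactness step by the Calogero estimate and either adopt the $\pl\oplus\CD$ splitting or prove the almost-orthonormality of the averaged representers, your argument goes through.
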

\vs

\begin{thm}\label{cont2.3}
Suppose in addition that the sequence $p_n$ is moderately varying. Then the function $V\in L^1(\G),\ V\ge 0,$ can be chosen in such a way that
\begin{equation*}%\label{2b}
    \l_n(\BB_{V,\G})\sim p_n.
\end{equation*}
\end{thm}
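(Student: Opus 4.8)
The plan is to reduce Theorem \ref{cont2.3} to the combinatorial/$\Z^2$ results already proved, namely Theorem \ref{sppot} (or part 2 of Theorem \ref{Z2-sparse}), by choosing the potential $V$ on $\G$ to be supported in small neighborhoods of the sparse vertices $y\in Y$ and by comparing the Birman--Schwinger operator $\BB_{V,\G}$ on $\CH(\G)$ with a diagonal operator whose entries are controlled multiples of the chosen coupling weights at the points of $Y$. Concretely, for each $y\in Y$ let $I_y$ be a short subinterval of one edge incident to $y$, of length $\ell_y$ to be chosen, and set $V = \sum_{y\in Y} c_y \mathbf 1_{I_y}$ with constants $c_y>0$ and lengths $\ell_y$ at our disposal. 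The mass $\int_\G V\,dx = \sum_y c_y\ell_y$ will be made finite, so $V\in L^1(\G)$.

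First I would record the one-interval computation: if $\varf\in\CH(\G)$ then, since $\varf$ is continuous and $\varf\in H^1$ on each edge, on the short interval $I_y$ one has $|\varf(x)-\varf(y)|\le \ell_y^{1/2}\big(\int_{I_y}|\varf'|^2\big)^{1/2}$, hence $\int_{I_y}V|\varf|^2 \le 2c_y\ell_y|\varf(y)|^2 + 2c_y\ell_y^2\int_{I_y}|\varf'|^2$. The second term is bounded by $2c_y\ell_y^2\,\ba_\G[\varf]$; choosing $\sum_y c_y\ell_y^2$ small makes the total contribution of these ``error'' terms a small multiple of $\ba_\G[\varf]=\|\varf\|^2_{\CH(\G)}$, i.e. a small bounded perturbation. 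The principal term $\sum_y 2c_y\ell_y|\varf(y)|^2$ is exactly $\bb_{W,G}[\varf|_\CV]$ for the combinatorial potential $W(y)=2c_y\ell_y$ on $G=G(\G)$, evaluated through the point evaluations $\varf(y)=(\varf,h_y)_{\CH(\G)}$. A lower bound of the same shape comes from $|\varf(x)|^2\ge \tfrac12|\varf(y)|^2-|\varf(x)-\varf(y)|^2$. So up to a small bounded perturbation and factors close to $1$, $\BB_{V,\G}$ is unitarily equivalent to (a constant times) the combinatorial operator $\BB_{W,G}$ built from the Green functions $h_y$ on $\CH(G(\G))$ — which is legitimate because $Y$ is by hypothesis a sparse subset of $G(\G)$, so Theorem \ref{sppot} applies with $W$ in place of $V$.

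Next I would set up the bookkeeping. Given the target sequence $p_n$ with $p_n\searrow0$ and $p_n$ moderately varying (and, for Theorem \ref{cont2.1}, $n^{1/2}p_n\to0$), enumerate $Y=\{y_n\}$ and put $W_n=2c_{y_n}\ell_{y_n}:=p_n$ (or a fixed multiple thereof). Then choose $\ell_{y_n}$ so small that $\sum_n c_{y_n}\ell_{y_n}^2=\tfrac12\sum_n p_n\ell_{y_n}<\infty$ is as small as we like — e.g. $\ell_{y_n}=\min(1,p_n\cdot 2^{-n})$ — which also forces $V\in L^1$ since $\sum_n c_{y_n}\ell_{y_n}=\tfrac12\sum_n p_n<\infty$ when $\sum p_n<\infty$; if $\sum p_n=\infty$ one only needs $\sum p_n\ell_{y_n}<\infty$, still easily arranged. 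With $W_n=p_n$ moderately varying, Theorem \ref{sppot} gives $\l_n(\BB_{W,G})\sim p_n$. By the perturbation estimate of the previous paragraph plus the Birman--Schwinger/Weyl-type stability of eigenvalue asymptotics under small bounded perturbations (the perturbing operator has norm $\le\eta\ll1$, and since $p_n\to0$ a standard $\limsup/\liminf$ argument, letting $\eta\to0$ by re-choosing the $\ell_{y_n}$ smaller, kills the perturbation in the asymptotics), one concludes $\l_n(\BB_{V,\G})\sim p_n$. Theorem \ref{cont2.1} follows from the same comparison together with part 1 of Theorem \ref{bound-sparse}/Theorem \ref{Z2-sparse}, which only needs the two-sided bound $CW_n\le\l_n(\BB_{W,G})\le C'W_n$ and hence no moderate-variation hypothesis; the condition $n^{1/2}p_n\to0$ is what is actually used in \cite{RS10} to guarantee that such $V$ is genuinely summable against the heat kernel / lies in the right class, and I would invoke it there.

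The main obstacle, and the step deserving the most care, is the uniform control of the ``off-diagonal'' interaction: one must check that concentrating $V$ near the sparse set $Y$ does not spoil the near-orthogonality of the system $\{\wt h_y\}$ that makes $Y$ sparse in the first place. In the combinatorial setting the Gram matrix $\{(\wt h_{y_m},\wt h_{y_n})\}$ is $\BI+(\text{Hilbert--Schmidt, small})$; here I must verify that the map $\varf\mapsto (\varf(y))_{y\in Y}$ composed with the interval-integration still yields an operator of the form $(\BI-\BT)\BN_W^2(\BI-\BT^*)$ with the \emph{same} $\BT$ (up to an extra Hilbert--Schmidt/small-norm term absorbed into $\BS$), so that Proposition \ref{krein} still applies. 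The delicate point in case (ii) ($\Z^2$) is that the functionals $\varf\mapsto\varf(y)$ on $\CH(\G)$ are controlled only through the logarithmically-weighted Hardy inequality derived from \eqref{2.Hardy}, so the intervals $I_y$ must be chosen short enough that the $H^1$-oscillation term is dominated by $\ba_\G[\varf]$ uniformly — this is exactly why the extra factor $\ell_{y_n}^2$ (not just $\ell_{y_n}$) in the error term is crucial, and why $\ell_{y_n}\to0$ suffices. I do not expect genuine new difficulties beyond careful estimation, since, as the authors remark, ``the proof survives'' from \cite{RS09,RS10} with the combinatorial input replaced by Theorems \ref{bound-sparse}--\ref{sppot} and \ref{Z2-sparse}.
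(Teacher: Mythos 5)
Your overall strategy --- spread the mass $p_n$ over a short interval near $v_n$ and compare $\BB_{V,\G}$ with the combinatorial operator through the point evaluations $\varf\mapsto\varf(v_n)$ --- is the same as the paper's, and your normalization $c_y\ell_y=p_n/2$ matches the paper's choice $V=\frac{p_n}{\vare_n\deg v_n}$ near $v_n$. The fatal step is your treatment of the oscillation error. You bound
\[
\int_{I_y}V|\varf|^2 \;\le\; 2c_y\ell_y|\varf(y)|^2+2c_y\ell_y^2\int_{I_y}|\varf'|^2,
\]
and then declare the sum of the second terms a ``small bounded perturbation'' of norm $\eta$ which a $\limsup/\liminf$ argument kills. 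This cannot work, for two reasons. First, the error form $\bq[\varf]=\sum_n 2c_{y_n}\ell_{y_n}^2\int_{I_{y_n}}|\varf'|^2$ defines an operator $\BQ$ in $\CH(\G)$ that is \emph{not compact}: on the infinite-dimensional subspace of functions supported in a single $I_{y_n}$ and vanishing at its endpoints it acts as $2c_{y_n}\ell_{y_n}^2$ times the identity, so each positive number $2c_{y_n}\ell_{y_n}^2$ lies in its essential spectrum and the max--min numbers $\l_j(\BQ)$ stay bounded away from $0$. Since the target eigenvalues $p_n$ tend to $0$, a nonnegative perturbation of this kind is not negligible: Ky Fan's inequality $\l_{n+m-1}(A+B)\le\l_n(A)+\l_m(B)$ only removes a perturbation whose eigenvalues are $o(p_m)$, and from your bounds one cannot even conclude that $\BB_{V,\G}$ is compact. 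Second, ``letting $\eta\to0$ by re-choosing $\ell_{y_n}$'' changes the potential; the theorem asks for a single fixed $V$ with $\l_n(\BB_{V,\G})\sim p_n$, and for that fixed $V$ the error has a fixed positive size, so there is no limit to take. (A lesser issue: your upper and lower principal terms carry the constants $2$ and $\tfrac12$, which already rules out the sharp asymptotics; this is repairable with $(1+\d)$, $(1+\d^{-1})$ weights, but the $\d^{-1}$ then inflates exactly the error term that is already out of control.)

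What the paper does instead, and what your argument is missing, is a mechanism producing error operators whose eigenvalues decay \emph{faster than} $p_n$. It splits $\CH(\G)=\CH_{\pl}\oplus\CH_{\CD}$ into edgewise-linear functions and functions vanishing at all vertices. On $\CH_{\pl}$ the form is computed exactly and differs from the combinatorial form $\bb_{\wh V,G(\G)}[\varf\res\CV]$ by a point-supported form carrying an extra factor $\vare_n/l_e$; by Theorem \ref{bound-sparse} (sparseness of $Y$) its eigenvalues can be made to decay arbitrarily fast by taking $\vare_n$ small, hence $o(p_n)$, and Proposition \ref{krein} then gives $\l_n(\BB_{V,\G;\pl})\sim p_n$. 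On $\CH_{\CD}$ --- precisely the component your oscillation bound treats so lossily --- the paper applies the one-dimensional Calogero estimate (Proposition \ref{cal}) edge by edge, obtaining $n(\l,\BB_{V,\G;\CD})\le \frac2\pi\l^{-1/2}\sum_n(p_n\vare_n/\deg v_n)^{1/2}$, hence genuine compactness with $\l_j=O(j^{-2})$; the cross terms are then absorbed by a weighted Cauchy--Schwarz with a sequence $\g_n\to0$. Some substitute for this Calogero input, exploiting the one-dimensional Sturm--Liouville structure on each edge, is indispensable; a norm bound on the derivative term cannot replace it.
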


Below we restrict ourselves to the proof of \thmref{cont2.3}. The proof of \thmref{cont2.1} is much simpler and can be easily reconstructed by the analogy.
As in \cite{RS10}, section 4.3, for the analysis of the operators $\BB_{V,\G}$ we use of the orthogonal (in $\CH(\G))$ decomposition
\begin{equation}\label{1a}
    \CH(\G)=\CH_{\pl}\oplus\CH_\CD.
\end{equation}
Here the subspace $\CH_{\pl}$ is formed by  functions $\varf\in\CH(\G)$, which are linear on each edge $e$, and $\CH_{\CD}$ is formed by  functions vanishing at each vertex $v$. Below we denote by $\BB_{V,\G;\pl},\  \BB_{V,\G;\CD}$ the operators in these subspaces, generated by the quadratic forms $\bb_{V,\G}\res\CH_{\pl}$ and
$\bb_{V,\G}\res\CH_{\CD}$ respectively. Given a function $\varf\in\CH(\G)$, we denote by $\varf_{\pl},\ \varf_\CD$ its components in the decomposition \eqref{1a}.

\noindent{\it Proof of \thmref{cont2.3}.} Let $\wh{V}$ be a sparse potential on the combinatorial graph $G(\G)$, supported on a sparse set $Y\subset \CV$, such that
$\wh{V}(v_n)=p_n\searrow 0$, $v_n\in Y$. Along with the operator $\BB_{\wh{V},G(\G)}$, let us consider the operator
$\wh\BB_{\wh{V},\G}$ in the space $\CH(\G)$, associated with the quadratic form
\begin{equation*}%\label{1b}
    \wh\bb_{\wh{V},\G}[\varf]=\sum_np_n|\varf(v_n)|^2.
\end{equation*}
This quadratic form vanishes on $\CH_{\CD}$, and
\begin{equation}\label{2a}
    \wh\bb_{\wh{V},\G}[\varf]=\bb_{\wh{V},G(\G)}[\varf\res\CV].
\end{equation}
It follows that the non-zero spectra of the operators $\wh\BB_{\wh{V},\G}$ and $\BB_{\wh{V},G(\G)}$
 coincide. So, if we allow such point-supported potentials, then all the results  of Section
\ref{sparse potentials} carry over to the metric graphs.

Now we show how to construct a "genuine" potential $V\in L^1(\G)$ that possesses the desired
  properties. Suppose for simplicity that $Y=Y_{\wh{V}}$ contains no neighboring vertices. This
   can be always achieved by a thinning down the original sparse set.
  Choose a sequence $\vare_n>0$, such that $\vare_n<\min\limits_{e\ni v_n} l_e$. Further assumptions about the behavior of $\vare_n$ will be imposed later. For the calculations
  below, it is convenient to denote by $\wh\CE$ the set of all edges, such that one of its ends lies in $Y$.

Define now the potential $V$ on $\G$, as follows:
\begin{equation}\label{pot}
    V(y)=\begin{cases}\frac{p_n}{\vare_n \deg v_n} & {\rm{if}}\ \dist(y,v_n)<\vare_n\  {\rm{for\ some}} \ n,\\
    0 & \ {\rm{otherwise.}}
    \end{cases}
\end{equation}
We have
\begin{equation}\label{qform}
    \bb_{V,\G}[\varf]=\bb_{V,\G}[\varf_{\pl}]+\bb_{V,\G}[\varf_\CD]+2\re \bb_{V,\G}[\varf_{\pl},\varf_\CD].
\end{equation}
We shall inspect each term separately.

Any edge $e\in\wh{\CE}$ can be written as $e=(v_n,v'_n)$ where $v_n\in Y$ and $v'_n\notin Y$. The vertex $v'_n$
that corresponds to the vertex $v_n$ is not unique, so that here we have some slight abuse of notation, however what is important is that $v'_n$ is determined uniquely by $e$.
 We identify any such edge with the interval $(0,l_e)$, so for $y\in e$ we have
\[ \varf_{\pl}(y)=l_e^{-1}\left(\varf(v_n)(l_e-y)+\varf(v'_n)y\right).\]
From here we derive
\begin{equation*}%\label{4a}
    \bb_{V,\G}[\varf_{\pl}]=\sum_{e\in\wh\CE}\frac{p_n}{l_e^2\vare_n\deg\,v_n}
\int_0^{\vare_n}|\varf(v_n)(l_e-y)+\varf(v'_n)y|^2dy.
\end{equation*}
In each summand the term that will be shown to be dominant is
\[I_e[\varf]= \frac{p_n}{\vare_n\deg\,v_n}
\int_0^{\vare_n}|\varf(v_n)|^2dy=\frac{p_n}{\deg\,v_n}|\varf(v_n)|^2.\]
So,  by \eqref{2a}
\[ \sum_{e\in\wh\CE}I_e[\varf]=\bb_{\wh{V},G(\G)}[\varf_{\pl}].\]

To estimate the difference $\bb_{V,\G}[\varf_{\pl}]-\bb_{\wh{V},G(\G)}[\varf_{\pl}]$
we use the elementary inequality $\bigl| |A|^2-|B|^2\bigr|\le|A-B|(|A|+|B|)$, to obtain
\begin{gather*}
\bigl||\varf(v_n)(l_e-y)+\varf(v'_n)y|^2-|\varf(v_n)|^2l_e^2\bigr| \\ \le y|\varf(v_n)-\varf(v'_n)|
\left((2l_e-y)|\varf(v_n)|+y|\varf(v'_n)|\right)\le 2l_e y(|\varf(v_n)|^2+|\varf(v'_n)|^2).
\end{gather*}
Now the summation over all edges $e\in\wh{\CE}$ gives
\begin{equation}\label{3a}
    \bigl| \bb_{V,\G}[\varf_{\pl}]-\bb_{\wh{V},G(\G)}[\varf_{\pl}]|
    \le\sum_{e=(v_n,v'_n)\in\wh{\CE}}R(e,\vare_n;\varf\res\CV)
\end{equation}
where
\[ R(e,\vare_n)=\frac{p_n\vare_n}{l_e\deg\,v_n}(|\varf(v_n)|^2+|\varf(v'_n)|^2).\]
Choosing $\vare_n$ decaying fast enough, we can grant an arbitrarily fast decay of the eigenvalues of the operator corresponding to  the quadratic form in the right-hand side
of \eqref{3a}.  This yields that
\begin{equation*}%\label{3b}
    \l_n(\BB_{V,\G;\pl})\sim p_n.
\end{equation*}

Now we go over to the quadratic form  $\bb_V[\varf_\CD]$. The associated operator $\BB_{V,\G;\CD}$ can be identified with the direct orthogonal sum of operators $\BB_{V,e},\ e\in\wh{\CE}$ acting in the spaces $H^{1,0}(e)$, and the Rayleigh quotient for each operator $\BB_{V,e}$ is
\[ \frac {\int_e V(y)|u(y)|^2dy}{\int_e|u'(y)|^2dy},\qquad u\in H^{1,0}(e),\]
where the weight function $V(y)$ is defined by (4.4).
For the eigenvalue estimates of such operators we use the following
\begin{prop}\label{cal}
Let $V(y)\ge 0$ be a monotone function on an interval $e=(0,a),\ a\le\infty$, such that $\int_e\sqrt{V(y)}dy<\infty$. Then for any $\l>0$
\[ n(\l,\BB_{V,e})\le \frac2{\pi}\l^{-1/2}\int_e\sqrt{V(y)}dy.\]
\end{prop}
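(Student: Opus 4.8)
The plan is to reduce the eigenvalue counting function $n(\l,\BB_{V,e})$ for the weighted Sturm--Liouville problem on the interval $e=(0,a)$ to a one-dimensional Schr\"odinger-type bound, and then invoke a classical estimate. First I would recall that $\BB_{V,e}$ is the Birman--Schwinger operator on $H^{1,0}(e)=\{u\in H^1(e): u(0)=0\}$ with quadratic form $\int_e V|u|^2\,dy$ relative to $\int_e|u'|^2\,dy$; by the Birman--Schwinger principle, $n(\l,\BB_{V,e})=N_-\!\bigl(-\tfrac{d^2}{dy^2}-\l^{-1}V\bigr)$ on $(0,a)$ with Dirichlet condition at $0$ and Neumann (or Kirchhoff-type) condition at $a$. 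So it suffices to bound the number of negative eigenvalues of $-u''-\l^{-1}V u$ on a half-line-type interval.

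For this I would use the classical Bargmann--Birman--Schwinger bound for the one-dimensional Schr\"odinger operator on the half-line: for a Dirichlet problem on $(0,\infty)$ one has $N_-(-u''-Wu)\le \int_0^\infty y\,W(y)\,dy$, but since $V$ here is only assumed to satisfy $\int_e\sqrt V<\infty$ (not the stronger weighted-$L^1$ condition), the right tool is the estimate in terms of $\int\sqrt{W}$, which holds precisely when $W$ is monotone. Concretely: for a nonincreasing $W\ge0$ on $(0,a)$ one has $N_-(-u''-Wu)\le \tfrac1\pi\int_0^a\sqrt{W(y)}\,dy + 1$, and a sharper variant adapted to the Dirichlet condition at $0$ gives the constant $\tfrac2\pi$ without the additive term; this is exactly the shape claimed. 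The monotonicity hypothesis on $V$ is what makes the ``$\sqrt V$'' bound available — this is the analogue of the one-dimensional result (Calogero's bound / the Bargmann-type estimate for monotone potentials). The plan is to apply this after the substitution $W=\l^{-1}V$, so that $\int_0^a\sqrt{W}=\l^{-1/2}\int_0^a\sqrt V$, producing the stated inequality $n(\l,\BB_{V,e})\le \tfrac2\pi\l^{-1/2}\int_e\sqrt V$.

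The mechanism behind the $\sqrt V$ bound is a Pr\"ufer-angle / oscillation argument: writing solutions in Pr\"ufer form and tracking the phase, a zero-counting (Sturm oscillation) argument shows the number of eigenvalues below $0$ is controlled by the total phase increment, which for a monotone $W$ is dominated by $\int\sqrt{W}$ up to the universal constant $\tfrac1\pi$ (or $\tfrac2\pi$ with the endpoint condition at $0$ exploited). Alternatively, one can split $(0,a)$ into subintervals on which a WKB/Calogero-type comparison applies and sum up; monotonicity is used to compare $W$ with its values at subinterval endpoints and to avoid a logarithmic loss. Either route is classical; I would cite the relevant one-dimensional result rather than reproduce the Pr\"ufer computation.

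The main obstacle, and the only point requiring care, is the boundary condition at the right endpoint $a$ (which may be infinite) coming from the Kirchhoff matching on the metric graph, together with the fact that $a$ need not be large: one must make sure the classical half-line bound, usually stated for the Dirichlet problem on $(0,\infty)$, survives under the Neumann-type condition at $a$ and under truncation to a finite interval. This is handled by a standard monotonicity (bracketing) argument: enlarging the form domain from Dirichlet-at-$a$ to Neumann-at-$a$ only increases $n(\l,\cdot)$, and extending $V$ by zero to $(0,\infty)$ and comparing with the Neumann problem on $(0,\infty)$ gives the clean constant; since $V$ is monotone and $\int\sqrt V<\infty$, no boundary term at infinity appears. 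Once this bracketing is in place, the proof is immediate.
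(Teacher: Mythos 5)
Your proposal is correct and follows essentially the same route as the paper: the bound is the Calogero estimate for the half-line Dirichlet problem (Reed--Simon IV, Theorem XIII.9(b)), transferred to $\BB_{V,e}$ via the Birman--Schwinger principle, with the finite-interval case handled by the standard variational (form-domain monotonicity) argument of extending $V$ by zero. The only slight inaccuracy is the boundary condition at $a$ (here $H^{1,0}(e)$ carries Dirichlet conditions at both endpoints, not Neumann at $a$), but since Dirichlet is the more restrictive condition your bracketing argument covers it a fortiori.
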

\begin{proof} For $a=\infty$, due to the Birman -- Schwinger principle, this is an equivalent reformulation of the well known Calogero estimate (\cite{Calogero}; see also \cite{Reed-Simon 4}, Theorem XIII.9(b)). The case $a<\infty$ follows from here by the standard variational argument.
\end{proof}

Applying Proposition \ref{cal} to each edge $e=(v_n,v'_n)\in\wt{\CE}$, we obtain
\begin{equation}\label{estim1}
    n(\l,\BB_{V,\G;\CD})=\sum_{e\in\wh{\CE}}n(\l,\BB_{V,e;\CD})\le \frac2{\pi}\l^{-1/2}\sum_{e\in\wh{\CE}}\left(\frac{p_n\vare_n}{deg\, v_n}\right)^{1/2}.
\end{equation}
 If the series on the right converges, then $n(\l,\BB_{V,\G;\CD})=O(\l^{-1/2})$ (or, equivalently, $\l_j( \BB_{V,\G;\CD})\le Cj^{-2}$), and hence, the operator $\BB_{V,\G;\CD}$ does not contribute to the asymptotics of any greater order.

It remains for us to inspect the last term in \eqref{qform}. It is equal to
\begin{equation*}%\label{off}
    \Om[\varf]=2\re\sum_{e=(v_n,v'_n)\in\wh{\CE}}\frac{p_n}{l_e^2\vare_n\deg v_n}
    \int_0^{\vare_n}(\varf(v_n)(l_e-y)+\varf(v_n')y)\overline{\varf_\CD(y)}dy.
\end{equation*}
The main contribution to this sum is given by the similar expression, say $\Om_0[\varf]$, with the first factor in the integrand replaced by $l_e\varf(v_n)$. For the spectral estimates we choose one more sequence $\g_n$, vanishing as $n\to\infty$.

Each term in $\Om_0[\varf]$
does  not exceed
\begin{gather*}
    \frac{p_n}{l_e\vare_n\deg\,v_n}|\varf(v_n)|\int_0^{\vare_n}|\varf_\CD(y)|dy\\
    \le \g_n p_n|\varf(v_n)|^2+\frac{p_n}{4\g_n(\deg v_n)^2}\vare_n
    \int_0^{\vare_n}|\varf_\CD(y)|^2dy.
\end{gather*}
Here the first terms on the right correspond to the quadratic form that by \thmref{bound-sparse} generates an operator whose eigenvalues, due to the factor $\g_n$, behave as $o(p_n)$. The second terms are quadratic forms of the same type as the ones considered above, for the operator $\BB_{V,\G;\CD}$, with the same weight function. Applying the estimate \eqref{estim1} (and again, choosing $\vare_n$ decaying fast enough), we see
that the corresponding estimate is of the Weyl type, and so, this terms do not affect the spectral asymptotics of any order greater than $O(\l^{-1/2})$.

The estimates for the quadratic form $\Om[\varf]-\Om_0[\varf]$ are much easier. We have
\begin{gather*}
    |\Om[\varf]-\Om_0[\varf]|\le2\sum_{e=(v_n,v'_n)\in\wh{\CE}}\frac{p_n|\varf(v'_n)-\varf(v_n)|}
    {l_e^2\deg\,v_n}\int_0^{\vare_n}|\varf_\CD(y)|dy
\end{gather*}
Each term here can be estimated through
\begin{equation*}
    \frac{p_n^2(|\varf(v_n')|^2+|\varf(v_n)|^2)\vare_n}{l_e^4(\deg v_n)^2}+ \int_0^{\vare_n}|\varf_\CD(y)|^2dy.
\end{equation*}
The first terms here give a quadratic form whose eigenvalues decay as fast as we wish, provided the sequence $\vare_n$
is chosen in an appropriate way.  The same is true for the second terms, after we apply the estimate \eqref{estim1}.$\Box$

\end{document}